\newcommand{\ur}{\mathrm{ur}}
\newcommand{\rs}{\mathrm{rs}}
\newcommand{\trs}{\theta\mathchar`-\mathrm{rs}}
\newcommand{\srs}{\mathrm{srs}}
\newcommand{\strs}{\theta\mathchar`-\mathrm{srs}}
\newcommand{\lan}{\langle}
\newcommand{\ran}{\rangle}
\newcommand{\ol}{\overline}
\newcommand{\mcO}{\mathcal{O}}
\newcommand{\mfp}{\mathfrak{p}}
\newcommand{\Z}{\mathbb{Z}}
\newcommand{\R}{\mathbb{R}}
\newcommand{\C}{\mathbb{C}}
\newcommand{\Gm}{\mathbb{G}_{\mathrm{m}}}
\newcommand{\T}{\mathbf{T}}
\newcommand{\bfS}{\mathbf{S}}
\newcommand{\bfG}{\mathbf{G}}
\newcommand{\bfZ}{\mathbf{Z}}
\newcommand{\w}{\widetilde}
\newcommand{\U}{\mathrm{U}}
\newcommand{\G}{\mathrm{G}}
\newcommand{\ra}{\rightarrow}
\DeclareMathOperator{\tr}{tr}
\DeclareMathOperator{\GL}{GL}
\DeclareMathOperator{\SO}{SO}
\DeclareMathOperator{\SL}{SL}
\DeclareMathOperator{\cInd}{c-Ind}
\DeclareMathOperator{\Hom}{Hom}
\DeclareMathOperator{\Gal}{Gal}
\DeclareMathOperator{\Kl}{Kl}
\DeclareMathOperator{\Tr}{Tr}
\DeclareMathOperator{\Nr}{Nr}
\DeclareMathOperator{\Cent}{Cent}
\DeclareMathOperator{\diag}{diag}
\DeclareMathOperator{\SSC}{SSC}
\theoremstyle{plain}
\newtheorem{thm}{Theorem}[section]
\newtheorem*{thm*}{Theorem}
\newtheorem{prop}[thm]{Proposition}
\newtheorem{lem}[thm]{Lemma}
\newtheorem{cor}[thm]{Corollary}
\theoremstyle{definition}
\newtheorem{defn}[thm]{Definition}
\theoremstyle{remark}
\newtheorem{rem}[thm]{Remark}
\newtheorem*{claim*}{Claim}
\title{Endoscopic lifting of simple supercuspidal representations of unramified $\U_{N}$ to $\GL_{N}$}
\author{Masao Oi}
\address{Graduate School of Mathematical Sciences, 
the University of Tokyo, 3-8-1 Komaba, Meguro-ku, Tokyo 153-8914, Japan.}
\email{masaooi@ms.u-tokyo.ac.jp}
\begin{document}

\begin{abstract}
We compute the characters of simple supercuspidal representations of twisted $\GL_{N}$ and unramified quasi-split unitary group $\U_{N}$ over a $p$-adic field.
Comparing them by the endoscopic character relation, we determine the endoscopic lifts of simple supercuspidal representations of $\U_{N}$ to $\GL_{N}$, under the assumption that $p$ is not equal to $2$.
\end{abstract}

\subjclass[2010]{Primary: 22E50; Secondary: 11F70, 11L05}

\maketitle

\section{Introduction}
The purpose of this paper is to describe the local Langlands correspondence explicitly for simple supercuspidal representations of unramified unitary groups.

Let us first recall the local Langlands correspondence for quasi-split classical groups, which have been established by Arthur (\cite{MR3135650}) and Mok (\cite{MR3338302}).
Let $\bfG$ be a quasi-split classical group (that is, a symplectic, special orthogonal, or unitary group) over a $p$-adic field $F$.
Let $\Pi(\bfG)$ be the set of equivalence classes of irreducible smooth representations of $G:=\bfG(F)$, 
and $\Phi(\bfG)$ the set of equivalence classes of $L$-parameters of $\bfG$.
Here an $L$-parameter of $\bfG$ is an admissible homomorphism from the Weil-Deligne group $W_{F}\times\SL_{2}(\C)$ to the $L$-group ${}^L\bfG=\widehat{G}\rtimes W_{F}$ of $\bfG$.
Then the \textit{local Langlands correspondence for $\bfG$} gives a natural map from the set $\Pi(\bfG)$ to the set $\Phi(\bfG)$ with finite fibers (called $L$-packets).

Here recall that the naturality of this map is formulated by the theory of \textit{endoscopy}.
More precisely, we first note that $\bfG$ can be regarded as an endoscopic group of $\widetilde{\bfG}=\GL_{N}$ (or a Weil restriction of $\GL_{N}$) for some $N$.
In particular, we have an embedding from ${}^{L}\bfG$ to ${}^{L}\!\GL_{N}$.
Therefore, for an $L$-parameter $\phi$ of $\bfG$, we can regard it as an $L$-parameter of $\widetilde{\bfG}$.
Thus $\phi$ defines $L$-packets of $\bfG$ and $\widetilde{\bfG}$:
\[
\xymatrix{
\Pi(\widetilde{\bfG}) \supset \Pi_{\phi}^{\widetilde{\bfG}}:=(\text{$L$-packet of $\widetilde{\bfG}$ for $\phi$})& \ar@{<~>}[r]^-{\text{LLC for $\widetilde{\bfG}$}} && W_F\times\SL_2(\C) \ar[rd]_-{\phi} \ar[r] & {}^{L}\widetilde{\bfG}\\
\Pi(\bfG) \supset \Pi_{\phi}^{\bfG}:=(\text{$L$-packet of $\bfG$ for $\phi$})& \ar@{<~>}[r]^-{\text{LLC for $\bfG$}} &&& {}^{L}\bfG \ar@{^{(}->}[u] \\
}
\]
In this situation, the $L$-packet $\Pi_{\phi}^{\widetilde{\bfG}}$ of $\widetilde{\bfG}$ is called the \textit{endoscopic lift} of the $L$-packet $\Pi_{\phi}^{\bfG}$ of $\bfG$.
Here we remark that the local Langlands correspondence for $\widetilde{\bfG}$ have been established by Harris and Taylor (\cite{MR1876802}), and in this case the map is bijective (i.e., every $L$-packet is a singleton).
Then, if $\phi$ is a tempered $L$-parameter, $\Pi_{\phi}^{\bfG}$ and $\Pi_{\phi}^{\widetilde{\bfG}}$ satisfy the \textit{endoscopic character relation}, which is an equality between the characters of representations belonging to $\Pi_{\phi}^{\bfG}$ and the twisted character of $\Pi_{\phi}^{\widetilde{\bfG}}$.
This is the characterization of the local Langlands correspondence for $\bfG$.

Our interest is to describe the local Langlands correspondence for $\bfG$ explicitly.
Then, from the above characterization of the correspondence, it is reduced to describing that for $\widetilde{\bfG}$ and determining the endoscopic liftings from $\bfG$ to $\widetilde{\bfG}$.
For this, it is important to compute the characters of representations.

In our past paper \cite{Oi:2016}, in the case of $(\widetilde{\bfG},\bfG)=(\GL_{2n}, \SO_{2n+1})$, we considered this problem for \textit{simple supercuspidal} representations, which are introduced in \cite{MR2730575} and \cite{MR3164986}.
These are supercuspidal representations having the minimal positive depth, and have been studied by many people in the context of finding an explicit description of the local Langlands correspondence.
In particular, their $L$-parameters have been described explicitly in the cases of general linear groups, by the works of Bushnell--Henniart (\cite{MR2148193}) and Imai--Tsushima (\cite{Imai:2015aa}).
In \cite{Oi:2016}, under the assumption that $p$ is not equal to $2$, we proved that a simple supercuspidal representation of $\SO_{2n+1}(F)$ constitutes an $L$-packet and that its lift to $\GL_{2n}(F)$ is again simple supercuspidal, and determined it explicitly.

In this paper, under the same assumption that $p$ is not equal to $2$, we consider this lifting problem for simple supercuspidal representations in the case of $(\widetilde{\bfG}, \bfG)=(\G_{N}, \U_{N})$.
Here, $\G_{N}$ is the Weil restriction $\mathrm{Res}_{E/F}\GL_{N}$ of $\GL_{N}$ from the unramified quadratic extension $E$ of $F$ to $F$, and $\U_{N}$ is the quasi-split unitary group with respect to $E/F$ in $N$ variables.
Then there exist two kinds of $L$-embeddings $\{\xi_{\kappa}\}_{\kappa\in\{\pm1\}}$ from ${}^{L}\U_{N}$ to ${}^{L}\G_{N}$ (see Section \ref{sec:endo}.1 for the details).
Hence we can consider the endoscopic liftings from $\U_{N}$ to $\G_{N}$ via these two kinds of $L$-embeddings.
On the other hand, after making some choices (for example, fixing a uniformizer of $F$), we can parametrize the set of equivalence classes of simple supercuspidal representations of these groups explicitly. 
We denote the parametrizing sets for them by $\SSC(\G_{N})$ and $\SSC(\U_{N})$ (see Sections \ref{sec:ssc}.2 and \ref{sec:ssc}.3 for the details).
Then our main theorem is stated as follows:

\begin{thm}[Theorem \ref{thm:main}]\label{thm:intro}
We assume that $p$ is not equal to $2$.
Let $X\in\SSC(\U_{N})$, and we consider the corresponding simple supercuspidal representation $\pi_{X}^{\U_{N}}$ of $\U_{N}(F)$.

\begin{enumerate}
\item
The $L$-packet of $\U_{N}$ containing the simple supercuspidal representation $\pi_{X}^{\U_{N}}$ is a singleton.
In particular, the character of $\pi_{X}^{\U_{N}}$ is stable.
\item
The endoscopic lift of the simple supercuspidal representation $\pi_{X}^{\U_{N}}$ of $\U_{N}(F)$ to $\G_{N}(F)$ via the $L$-embedding $\xi_{\kappa}$ is again simple supercuspidal.
Furthermore, we can describe the parameter $Y\in\SSC(\G_{N})$ corresponding to the lifted simple supercuspidal representation of $\G_{N}(F)$ explicitly in terms of $X$ and $\kappa$.
\end{enumerate}
\end{thm}

We note that, by combining this result with a description of $L$-parameters of simple supercuspidal representations of general linear groups due to Bushnell--Henniart and Imai--Tsushima, we get a complete description of the local Langlands correspondence for simple supercuspidal representations of $\U_{N}(F)$.

We explain the outline of our proof.
Our strategy is basically the same as in \cite{Oi:2016}.
That is, we descend simple supercuspidal representations of $\G_{N}(F)$ instead of lifting simple supercuspidal representations of $\U_{N}(F)$.
In order to determine the descended representations, we compute the endoscopic character relation explicitly.
In principle, representations can be recovered from their character completely.
However, it is not practical to compute their character at the all elements.
Therefore the key idea of our proof is to compute the characters only at ``essential'' elements.
Here we emphasize that the actual computations in the details of our arguments (such as the characters of representations or the  \textit{norm correspondence}, which is a correspondence between the stable conjugacy classes of $\w{\bfG}$ and $\bfG$ appearing in the endoscopic character relation) heavily depend on the individual groups $\w{\bfG}$ and $\bfG$.
Therefore we can not carry out such computations in a uniform way, and have to investigate the structures of our groups $\G_{N}$ and $\U_{N}$ carefully.

We now explain each step of our proof more precisely.
We first take a conjugate self-dual simple supercuspidal representation $\pi_{Y}^{\G_{N}}$ of $\G_{N}(F)$.
Then, from a property of the local Langlands correspondence, its $L$-parameter is conjugate self-dual.
In particular, the $L$-parameter factors through $\xi_{\kappa}$ for either $\kappa=+1$ or $\kappa=-1$.
To determine $\kappa$, we investigate the behavior of the twisted character of $\pi_{Y}^{\G_{N}}$ at some special elements of $\G_{N}(F)$.
By using the twisted character formula for supercuspidal representations, we write these character values explicitly in terms of the Kloosterman sums.
Combining the endoscopic character relation with properties of the Kloosterman sums, we can determine $\kappa$ (Section \ref{sec:main}.2).
We note that the arguments in this step are completely different from those of \cite{Oi:2016}.
In \cite{Oi:2016}, in order to show that the $L$-parameter of a simple supercuspidal representation of $\GL_{2n}(F)$ factors through the $L$-group of $\SO_{2n+1}$, we need Mieda's result on the parity of self-dual supercuspidal representations (\cite{Mieda:2016}), which is based on geometric techniques.

Then we get the $L$-packet of $\U_{N}$, and know that this is a singleton consisting of a supercuspidal representation $\pi_{\U_{N}}$ by a result of M{\oe}glin (\cite{MR2366373}).
The next step is to show that this unique representation $\pi_{\U_{N}}$ in the $L$-packet is simple supercuspidal.
To prove this, we again use the endoscopic character relation. 
By the endoscopic character relation, we can express the character of $\pi_{\U_{N}}$ in terms of the twisted character of $\pi_{Y}^{\G_{N}}$, which is already computed.
From this, we can show that $\pi_{\U_{N}}$ is either simple supercuspidal or depth-zero supercuspidal.
To eliminate the possibility that $\pi_{\U_{N}}$ is depth-zero supercuspidal, we next compute the character of depth-zero supercuspidal representations, and compare them.

The final step is to determine the parameter $X\in\SSC(\U_{N})$ corresponding to the representation $\pi_{\U_{N}}$.
However, by using several properties of Kloosterman sums, we can easily do this.
This completes the proof.

We finally comment on related works.
We believe that our method is basically valid for other representations of other groups as long as the endoscopic character relation exists and we can compute it at ``sufficiently'' many elements.
For example, as mentioned above, we can determine the endoscopic lift of simple supercuspidal representations of $\SO_{2n+1}(F)$ to $\GL_{2n}(F)$ by the same idea.
However, for the cases of symplectic group or unramified even orthogonal groups, the $L$-packets of simple supercuspidal representations of these groups are not singleton.
Therefore, in addition to the arguments used in this paper, we have to investigate the standard endoscopy for those $L$-packets.
On the other hand, in the case of ramified even orthogonal groups, the Kottwitz--Shelstad transfer factors are not trivial.
Thus the endoscopic character relations are more complicated than this paper's one.
Hence, also in this case, in addition to the techniques in this paper, we need further properties of the local Langlands correspondence.
For these groups, the simple supercuspidal $L$-packets are investigated in detail in \cite{Oi:2018}.
Other than these cases, for example, Adler and Lansky determined the endoscopic lifts (base change) for depth-zero supercuspidal representations of the quasi-split $\U_{3}$ (\cite{MR2167974,MR2657691}), and Blasco determined the endoscopic lifts of supercuspidal representations of the quasi-split $\U_{2}$ (\cite{MR2423457}) and ``very cuspidal'' representations of the quasi-split $\U_{3}$ (\cite{MR2680819}), as a consequence of an explicit computation of the character relation.


We explain the organization of this paper.
In Section \ref{sec:ssc}, we review some fundamental properties about Iwahori subgroups and simple supercuspidal representations, and explain parametrizations of simple supercuspidal representations of $\G_{N}(F)$ and $\U_{N}(F)$.
In Section \ref{sec:char}, we compute the (twisted) characters of simple supercuspidal representations of $\G_{N}(F)$ and $\U_{N}(F)$ at some special elements.
In Section \ref{sec:endo}, we investigate the norm correspondence and the transfer factor for $\G_{N}$ and $\U_{N}$.
In Section \ref{sec:main}, we first recall the endoscopic character relation in \cite{MR3338302}.
Then we determine the endoscopic lifts of simple supercuspidal representations by combining the endoscopic character relation with the results in Sections \ref{sec:char} and \ref{sec:endo}.

\medbreak
\noindent{\bfseries Acknowledgment.}\quad
This paper is a part of my master's thesis.
I would like to thank to my advisor Yoichi Mieda for his support and encouragement.
He carefully read the draft version of this paper, and gave me many valuable suggestions.
I am grateful to Naoki Imai for his helpful comments.

This work was carried out with the support from the Program for Leading Graduate Schools, MEXT, Japan.
This work was also supported by JSPS Research Fellowship for Young Scientists and KAKENHI Grant Number 17J05451.

\setcounter{tocdepth}{2}
\tableofcontents

\medbreak
\noindent{\bfseries Notation.}\quad
\begin{description}
\item[$p$-adic field]
Let $p$ be an odd prime number.
We fix a $p$-adic field $F$.
We denote its ring of integers, its maximal ideal, and its residue field by $\mcO$, $\mfp$, and $k$, respectively.
We fix a uniformizer $\varpi$ of $F$.
For $x \in \mcO$, $\bar{x}$ denotes the image of $x$ in $k$.
We often regard an element of $k^{\times}$ as an element of $F^{\times}$ by the Teichm{\"u}ller lift.
We denote the Weil group of $F$ by $W_{F}$.

\item[unramified quadratic extension of $F$]
Let $E$ be an unramified quadratic extension of $F$.
We write $c$ for the nontrivial element of $\Gal(E/F)$.
We denote its ring of integers, its maximal ideal, and its residue field by $\mcO_{E}$, $\mfp_{E}$, and $\tilde{k}$, respectively.

\item[quadratic extension of $k$]
We denote the norm and the trace with respect to the quadratic extension $\tilde{k}/k$ shortly by $\Nr$ and $\Tr$.
We set $\tilde{k}^{1}$ and $\tilde{k}^{0}$ to be the kernel of $\Nr$ and $\Tr$, respectively:
\[
\tilde{k}^{1}:= \mathrm{Ker}(\Nr\colon \tilde{k}^{\times}\rightarrow k^{\times}),\quad
\tilde{k}^{0}:=\mathrm{Ker}(\Tr\colon \tilde{k}\rightarrow k).
\]
We fix a square root $\epsilon\in\tilde{k}$ of a non-square element of $k^{\times}$.
Then note that we have $\tilde{k}^{0}=\epsilon k$.
Finally, we often identify $\tilde{k}^{1}$ with $\tilde{k}^{\times}/k^{\times}$ via the isomorphism 
\[
\tilde{k}^{\times}/k^{\times} \ra \tilde{k}^{1};\quad z\mapsto z/c(z).
\]

\item[additive character]
Throughout this paper, we fix an additive character $\psi$ on $F$ of level one.
Then its restriction $\psi|_{\mcO}$ to $\mcO$ induces a nontrivial additive character on $k$.
We denote it by $\psi$ again.
Furthermore, we denote the additive character $\psi\circ\Tr$ on $\tilde{k}$ by $\tilde{\psi}$.

\item[algebraic group]
For an algebraic group $\bfG$ over $F$, we denote its $F$-rational points $\bfG(F)$ by $G$.
When $\bfG$ is abelian, we write $G(q)$ for the subgroup of $G$ consisting of the elements of finite orders.
For a connected reductive group $\bfG$ over $F$, we denote its Langlands dual group and the $L$-group by $\widehat{G}$ and ${}^{L}\bfG$, respectively.
For an algebraic group $\mathbf{T}$ over $F$ or $\C$, we write $X^\ast(\mathbf{T})$ for its absolute character group and $X_{\ast}(\mathbf{T})$ for its absolute cocharacter group.

\item[matrix]
We denote the identity matrix of size $N$ by $I_{N}$.
We set $J_{N}$ to be the anti-diagonal matrix whose $(i, N+1-i)$-th entry is given by $(-1)^{i-1}$:
\[
J_{N}:=
\begin{pmatrix} 
&&&1\\
&&-1&\\
&\adots&&\\
(-1)^{N-1}&&&
\end{pmatrix}.
\]
\end{description}

\section{Simple supercuspidal representations}\label{sec:ssc}

\subsection{Iwahori subgroups and simple supercuspidal representations}
We first recall the definition of simple supercuspidal representations.
See Section 2 in \cite{Oi:2018} for the details of the arguments in this section.

Let $\bfG$ be a quasi-split connected reductive group over $F$, and we assume that $\bfG$ is tamely ramified over $F$.
We write $\bfZ_{\bfG}$ for the center of $\bfG$.
We fix a maximal $F$-split torus $\bfS_{\bfG}$ of $\bfG$.
Then we get a relative root (resp.\ affine root) system $\Phi_{\bfG}$ (resp.\ $\Psi_{\bfG}$).
We fix a Borel subgroup of $\bfG$ which contains $\bfS_{\bfG}$ and is defined over $F$, and write $\Delta_{\bfG}$ for the corresponding root basis of $\Phi_{\bfG}$.
By choosing an alcove $\mathcal{C}$ of an apartment $\mathcal{A}(\bfG,\bfS_{\bfG})$ of the Bruhat--Tits building of $\bfG$, we get the corresponding affine basis $\Pi_{\bfG}$ of $\Psi_{\bfG}$ and the Iwahori subgroup $I_{\bfG}$ of $G$.
Furthermore, by the theory of the Moy--Prasad filtration, we have a filtration of $I_{\bfG}$ associated to the barycenter of the alcove $\mathcal{C}$.
We denote the first two steps of this filtration by $I_{\bfG}^{+}$ and $I_{\bfG}^{++}$.

Let $\T_{\bfG}$ be the centralizer of $\bfS_{\bfG}$ in $\bfG$, $T_{\bfG}^{0}$ the maximal compact subgroup of $T_{\bfG}=\T_{\bfG}(F)$, and $T_{\bfG}^{1}$ the pro-unipotent radical of $T_{\bfG}^{0}$.
Then recall that the groups $I_{\bfG}$, $I_{\bfG}^{+}$, and $I_{\bfG}^{++}$ are described explicitly in terms of affine root subgroups as follows:
\begin{align*}
I_{\bfG} &:= \lan T_{\bfG}^{0}, U_\alpha \mid \alpha \in \Psi_{\bfG}^+\ran,\\
I_{\bfG}^{+} &:= \lan T_{\bfG}^{1}, U_\alpha \mid \alpha \in \Psi_{\bfG}^+\ran \text{, and}\\
I_{\bfG}^{++} &:= \lan T_{\bfG}^{1}, U_\alpha \mid \alpha \in \Psi_{\bfG}^+ \setminus \Pi_{\bfG}\ran,
\end{align*}
where $\Psi_{\bfG}^{+}$ is the set of positive affine roots, and $U_{\alpha}$ is the affine root subgroup of $G$ associated to $\alpha$.
Then the quotient $I_{\bfG}/I_{\bfG}^{+}$ is identified with the group $T_{\bfG}(q)$ consisting of elements of $T_{\bfG}$ of finite orders.
Moreover, if we put
\[
V_{\bfG}:=I_{\bfG}^{+}/I^{++}_{\bfG},
\]
then it has a structure of $k$-vector space and decomposes into a direct sum of $\dot{\alpha}$-isotypic parts with respect to the $S_{\bfG}(q)$-action, where $\dot{\alpha}$ is the gradient of a simple affine root $\alpha\in\Pi_{\bfG}$:
\[
V_{\bfG}\cong\bigoplus_{\alpha\in\Pi_{\bfG}} V_{\bfG}(\dot{\alpha}).
\]

For an element $g$ in $I_{\bfG}^{+}$, we call the image of $g$ in $V_{\bfG}(\dot{\alpha})$ the \textit{simple affine component} with respect to $\alpha$.
If $g\in G$ belongs to $I_{\bfG}^{+}$ and every simple affine component is not zero, then we say that $g$ is \textit{affine generic}.

For a character $\chi$ of $Z_{\bfG}I_{\bfG}^{+}$, we say that $\chi$ is \textit{affine generic} if $\chi$ satisfies the following two conditions:
\begin{itemize}
\item $\chi|_{I_{\bfG}^{+}}$ is trivial on $I_{\bfG}^{++}$ (hence factors through $V_{\bfG}$), and
\item $\chi|_{I_{\bfG}^{+}}$ is not trivial on $V_{\bfG}(\dot{\alpha})$ for every $\alpha\in\Pi_{\bfG}$.
\end{itemize}
Let $\chi$ be an affine generic character of $Z_{\bfG}I_{\bfG}^{+}$.
Then we define the normalizer $N_{G}(I_{\bfG}^{+};\chi)$ of $\chi$ as follows:
\[
N_{G}(I_{\bfG}^{+};\chi):=\{n\in N_{G}(I_{\bfG}^{+}) \mid \chi^{n}=\chi\}.
\]
Here $N_{G}(I_{\bfG}^{+})$ is the normalizer of $I_{\bfG}^{+}$ in $G$, and $\chi^{n}$ is the twist of $\chi$ via $n$ defined by
\[
\chi^{n}(x):=\chi(nxn^{-1}).
\]
Then, for any irreducible constituent $\tilde{\chi}$ of $\cInd_{Z_{\bfG}I_{\bfG}^{+}}^{N_{G}(I_{\bfG}^{+};\chi)}\chi$, the representation 
\[
\cInd_{N_{G}(I_{\bfG}^{+};\chi)}^{G}\tilde{\chi}
\]
is irreducible, hence supercuspidal.
We call the irreducible supercuspidal representations of $G$ obtained in this way \textit{simple supercuspidal representations} of $G$.

In the rest of this section, for the groups considered in this paper, we explain the following data:
\begin{itemize}
\item
the (affine) root system and a choice of its (affine) root basis,
\item
the structure of the graded quotient of the Moy--Prasad filtration of the Iwahori subgroup,
\item
the structure of the normalizer group for each affine generic character, and
\item
a parametrization of the equivalence classes of simple supercuspidal representations.
\end{itemize}

\subsection{Parametrization: the case of $\G_{N}$}
In this subsection, we consider the case of 
\[
\G_{N}:=\mathrm{Res}_{E/F}(\GL_{N}).
\]
Note that we have $\G_{N}(F)=\GL_{N}(E)$.

We choose $\bfS_{\G_{N}}$ to be the subgroup of diagonal matrices whose $F$-valued points are given by
\[
S_{\G_{N}}=\{\diag(t_1, \ldots, t_{N}) \in \GL_{N}(E) \mid t_i\in F^{\times} \}.
\]
Then its centralizer $\T_{\G_{N}}$ is a maximal torus of $\G_{N}$ whose $F$-valued points are given by
\[
T_{\G_{N}}=\{ \diag(t_1, \ldots, t_{N}) \in \GL_{N}(E) \mid t_i\in E^{\times} \}.
\]
The relative root system and the affine root system are given by
\[
\Phi_{\G_{N}}=\{\pm(e_i-e_j) \mid 1\leq i<j\leq N\},
\]
\[
\Psi_{\G_{N}}=\{a+r \mid a\in\Phi_{\G_{N}}, r\in\Z\}.
\]
We take the root basis and the affine root basis to be
\[
\Delta_{\G_{N}}=\{e_1-e_2, \ldots, e_{N-1}-e_{N}\},
\]
\[
\Pi_{\G_{N}}=\{e_1-e_2, \ldots, e_{N-1}-e_{N}, e_{N}-e_1+1\}.
\]
For $x=(x_{ij})_{ij} \in I_{\G_{N}}^{+}$, we regard its simple affine components as an element of $\tilde{k}^{\oplus N}$ by
\begin{align*}
I_{\G_{N}}^{+}/I_{\G_{N}}^{++} &\cong \bigoplus_{\alpha\in\Pi_{\G_{N}}}V_{\G_{N}}(\dot{\alpha}) \cong \tilde{k}^{\oplus N} \\
(x_{ij})_{ij} &\mapsto \left(\ol{x_{1 2}}, \ldots, \ol{x_{N-1, N}}, \ol{x_{N 1}\varpi^{-1}}\right).
\end{align*}

For $a\in \tilde{k}^{\times}$, we put
\[
\varphi_a := 
\begin{pmatrix}
0 & I_{N-1} \\
\varpi a & 0 
\end{pmatrix} \in \G_{N}(F)=\GL_{N}(E).
\]
Then the normalizer of the Iwahori subgroup $I_{\G_{N}}$ is described as follows:
\begin{lem}\label{lem:N-Iwahori}
For any $a$, we have 
$N_{\G_{N}(F)}(I_{\G_{N}})=N_{\G_{N}(F)}(I_{\G_{N}}^{+})=Z_{\G_{N}}I_{\G_{N}}\lan\varphi_{a}\ran$.
\end{lem}

\begin{proof}
Let $\GL_{N,E}$ be the split general linear group of size $N$ over $E$.
Then, since the Iwahori subgroup $I_{\G_{N}}$ (resp.\ its pro-unipotent radical $I_{\G_{N}}^{+}$) of $\G_{N}(F)$ can be regarded as the standard Iwahori subgroup $I_{\GL_{N,E}}$ (resp.\ its pro-unipotent radical $I_{\GL_{N,E}}^{+}$) of $\GL_{N,E}(E)$, it is enough to show that
\[
N_{\GL_{N,E}(E)}(I_{\GL_{N,E}})=N_{\GL_{N,E}(E)}(I_{\GL_{N,E}}^{+})=Z_{\GL_{N,E}}I_{\GL_{N,E}}\lan\varphi_{a}\ran.
\]

By \cite[Lemma 2.4]{Oi:2018}, to show these equalities, it suffices to check that, for any $a$, $\lan\varphi_{a}\ran$ is a set of representatives of the subgroup $\widetilde{\Omega}$ of the Iwahori Weyl group $\widetilde{W}$ consisting of the elements normalizing $I_{\GL_{N,E}}$ (see \cite[(1.5)]{MR3481263} and also \cite[Proposition 2.3]{Oi:2018}).
We can easily check that $\lan\varphi_{a}\ran$ normalizes the Iwahori subgroup $I_{\GL_{N,E}}$.
Thus, since $\widetilde{\Omega}$ maps to $X^{\ast}(Z_{\widehat{\GL_{N,E}}})$ isomorphically under the Kottwitz homomorphism
\[
\kappa_{\GL_{N,E}}\colon
\GL_{N,E}(E) \rightarrow X^{\ast}(Z_{\widehat{\GL_{N,E}}})
\]
(see \cite[(1.5)]{MR3481263} and also \cite[Proposition 2.3]{Oi:2018}), it is enough to show that $\lan\varphi_{a}\ran$ surjects on $X^{\ast}(Z_{\widehat{\GL_{N,E}}})$ under the Kottwitz homomorphism $\kappa_{\GL_{N,E}}$.

In order to show this, let us recall the definition of the Kottwitz homomorphism $\kappa_{\GL_{N,E}}$.
We first note that the dual of the quotient $\GL_{N,E}/\SL_{N,E}\cong\Gm$ can be identified with the center $Z_{\widehat{\GL_{N,E}}}$ of $\widehat{\GL_{N,E}}$.
Then $\kappa_{\GL_{N,E}}$ is defined to be the composition of the determinant map
\[
\det\colon \GL_{N,E}(E) \rightarrow (\GL_{N,E}/\SL_{N,E})(E)\cong\Gm(E)
\]
and the map
\[
\Gm(E) \rightarrow \Hom(X^{\ast}(\Gm), \Z) \cong X_{\ast}(\Gm) \cong X^{\ast}(Z_{\widehat{\GL_{N,E}}})
\]
(see Section 7.2 and the diagram (7.4.1) in \cite{MR1485921} for the details).
Here the first part of the latter map sends an element $t\in\Gm(E)$ to the element of $\Hom(X^{\ast}(\Gm),\Z)$ defined by
\[
\lambda \mapsto \mathrm{val}(\lambda(t)),
\]
where $\mathrm{val}$ is the normalized valuation of $E^{\times}$.
Then, since $\det(\varphi_{a})$ is a uniformizer of $E^{\times}$, the image of $\varphi_{a}$ under the Kottwitz homomorphism $\kappa_{\GL_{N,E}}$ generates $X^{\ast}(Z_{\widehat{\GL_{N,E}}})$
\end{proof}

For a character $\omega$ on $\tilde{k}^{\times}$, $a \in \tilde{k}^{\times}$, and $\zeta\in\C^{\times}$, we define a character 
$\chi^{\G_{N}}_{\omega,a,\zeta}$ on the group $Z_{\G_{N}}I_{\G_{N}}^{+}\lan\varphi_{a^{-1}}\ran$ to be
\begin{align*}
 \chi^{\G_{N}}_{\omega,a,\zeta}(z) &= \omega(\ol{z}) \text{ for $z \in Z_{\G_{N}}(q)$,}\\
 \chi^{\G_{N}}_{\omega,a,\zeta}(x) &= \tilde{\psi}\left(\ol{x_{12}}+\cdots+\ol{x_{N-1, N}}+a\ol{x_{N, 1}\varpi^{-1}}\right),\\
 \chi^{\G_{N}}_{\omega,a,\zeta}(\varphi_{a^{-1}}) &= \zeta.
\end{align*}
Then $\chi_{\omega,a,\zeta}^{\G_{N}}|_{Z_{\G_{N}}I_{\G_{N}}^{+}}$ is an affine generic character and we have
\[
N_{\G_{N}(F)}\bigl(I_{\G_{N}}^{+};\chi_{\omega,a,\zeta}^{\G_{N}}|_{Z_{\G_{N}}I_{\G_{N}}^{+}}\bigr)
=
Z_{\G_{N}}I_{\G_{N}}^{+}\lan\varphi_{a^{-1}}\ran
\]

Let $\pi_{\omega,a,\zeta}^{\G_{N}}$ be the simple supercuspidal representation of $\G_{N}(F)$ defined by
\[
\pi_{\omega,a,\zeta}^{\G_{N}}:=\cInd^{\G_{N}(F)}_{Z_{\G_{N}}I_{\G_{N}}^{+}\lan\varphi_{a^{-1}}\ran} \chi^{\G_{N}}_{\omega,a,\zeta}.
\]
Then, for example by using Proposition 2.6 in \cite{Oi:2018}, we can check that the set 
\[
\SSC(\G_{N})
:=
\big\{(\omega,a,\zeta) \mid \omega\colon \tilde{k}^{\times}\ra\C^{\times}, a\in\tilde{k}^{\times}, \zeta\in\C^{\times} \big\}
\]
parametrizes the set of equivalence classes of simple supercuspidal representations of $\G_{N}(F)$.

Now we investigate the $\theta$-stable (conjugate self-dual) simple supercuspidal representations.
Let $\theta$ be an automorphism of $\G_{N}$ defined by
\[
\theta(g)=J_{N} {}^{t}\!c(g)^{-1}J_{N}^{-1}.
\]
Then the automorphism $\theta$ preserves the subgroups $Z_{\G_{N}}$, $I_{\G_{N}}^{+}$, and $I_{\G_{N}}^{++}$, and induces the automorphisms of $Z_{\G_{N}}$ and $V_{\G_{N}} \cong \tilde{k}^{\oplus N}$ as follows:
\[
\theta(z)=c(z)^{-1}
 \quad\text{for}\quad z\in Z_{\G_{N}},
\]
\[
\theta(x_1, \ldots, x_{N-1}, x_{N})=
\bigl(c(x_{N-1}), \ldots, c(x_1), (-1)^{N}c(x_{N})\bigr)
 \quad\text{for}\quad (x_{i})_{i}\in V_{\G_{N}}.
\]
On the other hand, by a simple computation, we can check 
\[
\theta(\varphi_{a^{-1}})=-\varphi_{(-1)^{N}c(a)^{-1}}^{-1}.
\]
Therefore we have
\begin{align*}
(\pi^{\G_{N}}_{\omega,a,\zeta})^{\theta} 
&\cong \cInd_{Z_{\G_{N}}I_{\G_{N}}^{+}\lan\theta(\varphi_{a^{-1}})\ran}^{\G_{N}(F)} (\chi^{\G_{N}}_{\omega,a,\zeta})^{\theta} \\
&\cong \cInd_{Z_{\G_{N}}I_{\G_{N}}^{+}\lan\varphi_{(-1)^{N}c(a)^{-1}}\ran}^{\G_{N}(F)} \chi^{\G_{N}}_{c(\omega)^{-1},(-1)^{N}c(a),\zeta^{-1}\omega(-1)}\\
&= \pi^{\G_{N}}_{c(\omega)^{-1},(-1)^{N}c(a),\zeta^{-1}\omega(-1)}.
\end{align*} 
Thus, the representation $\pi^{\G_{N}}_{\omega,a,\zeta}$ is $\theta$-stable if and only if the following conditions hold:
\begin{enumerate}
\item
$\omega$ is conjugate self-dual, that is $\omega=c(\omega)^{-1}$ (note that this condition is equivalent to the condition that $\omega$ is trivial on $\Nr(\tilde{k}^{\times})=k^{\times}$),
\item
$a=(-1)^{N}c(a)$, and
\item
$\zeta=\zeta^{-1}$ (note that we always have $\omega(-1)=1$ under the condition (1)).
\end{enumerate}
In summary, we can parametrize the set of equivalence classes of $\theta$-stable simple supercuspidal representations of $\G_{N}(F)$ by the following set:
\[
\SSC^{\theta}(\G_{N})
\]
\[
:=
\begin{cases}
\{(\omega,a,\zeta)\in\SSC(\G_{N}) \mid \omega\in(\tilde{k}^{\times}/k^{\times})^{\vee}, a\in k^{\times}, \zeta=\pm1 \}& \text{ if $N$ is even}, \\
\{(\omega,a,\zeta)\in\SSC(\G_{N}) \mid \omega\in(\tilde{k}^{\times}/k^{\times})^{\vee}, a\in (\tilde{k}^{0})^{\times}, \zeta=\pm1 \}& \text{ if $N$ is odd}.
\end{cases}
\]
Here $(\tilde{k}^{\times}/k^{\times})^{\vee}$ is the set of characters on $\tilde{k}^{\times}$ which are trivial on $k^{\times}$ and we put
\[
(\tilde{k}^{0})^{\times}:=\tilde{k}^{0}\cap \tilde{k}^{\times}=\epsilon k^{\times}.
\]

\subsection{Parametrization: the case of $\U_{N}$}
In this subsection, we consider the case of the unramified quasi-split unitary group of degree $N$:
\[
\U_{N} := \{g \in \mathrm{Res}_{E/F}(\GL_{N}) \mid {}^{t}\!c(g)J_{N}g=J_{N}\}.
\]
Note that we can identify the center $\bfZ_{\U_{N}}$ with $\U_{1}$.

The choices and data $\bfS_{\U_{N}}$, $\T_{\U_{N}}$, $\Phi_{\U_{N}}$, $\Psi_{\U_{N}}$, $\Delta_{\U_{N}}$, $\Pi_{\U_{N}}$, $V_{\U_{N}}$, and a set of representatives of equivalence classes of affine generic characters needed to obtain simple supercuspidal representations are described as follows:

\begin{description}
\item[Even case: $N=2n$]
We choose $\bfS_{\U_{2n}}$ to be the diagonal maximal split torus whose $F$-valued points are given by
\[
S_{\U_{2n}}
=
\{\diag(t_1, \ldots, t_n, t_n^{-1}, \ldots, t_1^{-1}) \mid t_i \in F^{\times}\}.
\]
Then its centralizer $\T_{\U_{2n}}$ is a maximal torus with $F$-valued points
\[
T_{\U_{2n}}
=
\{\diag(t_1, \ldots, t_n, c(t_n)^{-1}, \ldots, c(t_1)^{-1}) \mid t_i \in E^{\times}\}.
\]
The relative root system and the affine root system are given by
\[
\Phi_{\U_{2n}}=\{\pm e_i\pm e_j \mid 1\leq i<j\leq n\} \cup \{\pm2e_{i} \mid 1\leq i\leq n\},
\]
\[
\Psi_{\U_{2n}}=\{a+r \mid a\in\Phi_{\U_{2n}}, r\in\Z\}.
\]
We take the root basis and the affine root basis to be 
\[
\Delta_{\U_{2n}}=\{e_1-e_2, \ldots, e_{n-1}-e_n, 2e_n\},
\]
\[
\Pi_{\U_{2n}}=\{e_1-e_2, \ldots, e_{n-1}-e_n, 2e_n, -2e_1+1\}.
\]
We note that, for these simple affine roots, the corresponding affine root subgroups are described as follows:
\begin{itemize}
\item If $\alpha=e_{i}-e_{i+1}$ for $1\leq i\leq n-1$, then
 \[
 U_\alpha=\{I_{2n}+u_{e_{i}-e_{i+1}}(x) \in \U_{2n}(F) \mid x\in\mcO_{E}\}, 
 \] 
 where 
 \[
 \bigl(u_{e_{i}-e_{i+1}}(x)\bigr)_{kl}= 
 \begin{cases}
  x & (k, l)=(i, i+1),\\
  c(x) & (k, l)=(2n-i, 2n-i+1),\\
  0 & $otherwise$.
 \end{cases}
 \]
\item If $\alpha=2e_{n}$, then
 \[
 U_\alpha=\{I_{2n}+u_{e_{2n}}(x) \in \U_{2n}(F) \mid x\in\mcO\}, 
 \]
 where 
 \[
 \bigl(u_{e_{2n}}(x)\bigr)_{kl}=
 \begin{cases}
  x & (k, l)=(n, n+1),\\
  0 & $otherwise$.
 \end{cases}
 \]
\item If $\alpha=-2e_{1}+1$, then
 \[
 U_\alpha=\{I_{2n}+u_{-2e_{1}}(x) \in \U_{2n}(F) \mid x\in\mfp\}, 
 \]
 where 
 \[
 \bigl(u_{-2e_{1}}(x)\bigr)_{kl}=
 \begin{cases}
  x & (k, l)=(2n, 1),\\
  0 & $otherwise$.
 \end{cases}
 \]
\end{itemize}
For $y=(y_{ij})_{ij} \in I_{\U_{2n}}^{+}$, we regard its simple affine components as an element of $\tilde{k}^{\oplus n-1}\oplus k \oplus k$ by
\begin{align*}
I_{\U_{2n}}^{+}/I_{\U_{2n}}^{++} &\cong \bigoplus_{\alpha\in\Pi_{\U_{2n}}}V(\dot{\alpha}) \cong \tilde{k}^{\oplus n-1}\oplus k \oplus k \\
(y_{ij})_{ij} &\mapsto \left(\ol{y_{1 2}}, \ldots, \ol{y_{n-1, n}}, \ol{y_{n, n+1}}, \ol{y_{2n, 1}\varpi^{-1}}\right).
\end{align*}
For $b \in k^{\times}$, we define a character $\chi^{\U_{2n}}_{b} \colon I_{\U_{2n}}^{+} \ra \C^{\times}$ by 
\[
\chi^{\U_{2n}}_{b}(y) := \tilde{\psi}\left(\ol{y_{12}}+\cdots+\ol{y_{n-1, n}}\right) \cdot \psi\left(\ol{y_{n, n+1}}+b\ol{y_{2n, 1}\varpi^{-1}}\right).
\]

\item[Odd case: $N=2n+1$]
We choose $\bfS_{\U_{2n+1}}$ to be the diagonal maximal split torus whose $F$-valued points are given by
\[
S_{\U_{2n+1}} = \big\{t=\diag(t_1, \ldots, t_n, 1, t_n^{-1}, \ldots, t_1^{-1}) \mid t_i \in F^{\times}\big\}.
\]
Then its centralizer $\T_{\U_{2n+1}}$ is a maximal torus with $F$-valued points
\[
T_{\U_{2n+1}} = \big\{t=\diag(t_1, \ldots, t_n, z, c(t_n)^{-1}, \ldots, c(t_1)^{-1}) \mid t_i \in E^{\times}, z\in\U_{1}(F)\big\}.
\]
Then we have 
\[
\Phi_{\U_{2n+1}}=\{\pm e_i\pm e_j \mid 1\leq i<j\leq n\} \cup \{\pm e_{i}, \pm 2e_{i}\mid 1\leq i \leq n\},
\]
\[
\Psi_{\U_{2n+1}}=\{a+r \mid a\in\Phi_{\U_{2n+1}}, r\in\Z\}.
\]
We take the root basis and the affine root basis to be
\[
\Delta_{\U_{2n+1}}=\{e_1-e_2, \ldots, e_{n-1}-e_n, e_n\},
\]
\[
\Pi_{\U_{2n+1}}=\{e_1-e_2, \ldots, e_{n-1}-e_n, e_n, -2e_1+1\}.
\]
We note that, for these simple affine roots, the corresponding affine root subgroups are described as follows:
\begin{itemize}
\item If $\alpha=e_{i}-e_{i+1}$ for $1\leq i \leq n-1$, then
 \[
 U_\alpha=\{I_{2n+1}+u_{e_{i}-e_{i+1}}(x) \in \U_{2n+1}(F) \mid x\in\mcO_{E}\}, 
 \] 
 where 
 \[
 \bigl(u_{e_{i}-e_{i+1}}(x)\bigr)_{kl}= 
 \begin{cases}
  x & (k, l)=(i, i+1),\\
  c(x) & (k, l)=(2n-i+1, 2n-i+2),\\
  0 & $otherwise$.
 \end{cases}
 \]
\item If $\alpha=e_{n}$, then
 \[
 U_\alpha=\{I_{2n+1}+u_{e_{n}}(x, y) \in \U_{2n+1}(F) \mid x, y\in\mcO_{E},\, xc(x)=y+c(y)\}, 
 \]
 where 
 \[
 \bigl(u_{e_{n}}(x,y)\bigr)_{kl}=
 \begin{cases}
  x & (k, l)=(n, n+1),\\
  c(x) & (k, l)=(n+1, n+2),\\
  y & (k, l)=(n, n+2),\\
  0 & $otherwise$.
 \end{cases}
 \]
\item If $\alpha=-2e_{1}+1$, then
 \[
 U_\alpha=\{I_{2n+1}+u_{-2e_{1}}(y) \in \U_{2n+1}(F) \mid y\in\mfp_{E},\, y+c(y)=0\}, 
 \]
 where 
 \[
 \bigl(u_{-2e_{1}}(y)\bigr)_{kl}=
 \begin{cases}
  y & (k, l)=(2n+1, 1),\\
  0 & $otherwise$.
 \end{cases}
 \]
\end{itemize}
For $y=(y_{ij})_{ij} \in I_{\U_{2n+1}}^{+}$, we regard its simple affine components as an element of $\tilde{k}^{\oplus n}\oplus \tilde{k}^{0}$ by
\begin{align*}
I_{\U_{2n+1}}^{+}/I_{\U_{2n+1}}^{++} &\cong \bigoplus_{\alpha\in\Pi_{\U_{2n+1}}}V(\dot{\alpha}) \cong \tilde{k}^{\oplus n}\oplus \tilde{k}^{0} \\
(y_{ij})_{ij} &\mapsto \left(\ol{y_{1 2}}, \ldots, \ol{y_{n, n+1}}, \ol{y_{2n+1, 1}\varpi^{-1}}\right).
\end{align*}
For $b \in (\tilde{k}^{0})^{\times}$, we define a character $\chi^{\U_{2n+1}}_{b} \colon I_{\U_{2n+1}}^{+} \ra \C^{\times}$ by 
\[
\chi^{\U_{2n+1}}_{b}(y) := \tilde{\psi}\left(\ol{y_{12}}+\cdots+\ol{y_{n, n+1}}\right) \cdot \psi\left(b\ol{y_{2n+1, 1}\varpi^{-1}}\right).
\]
\end{description}


\begin{lem}\label{lem:N-Iwahori-U}
We have $N_{\U_{N}(F)}(I_{\U_{N}})=N_{\U_{N}(F)}(I_{\U_{N}}^{+})=I_{\U_{N}}$.
\end{lem}

\begin{proof}
By the same argument as in the proof of Lemma \ref{lem:N-Iwahori}, it is enough to show that the group
\[
X^{\ast}(Z_{\widehat{\U_{N}}})^{\Gal(F^{\ur}/F)}
\]
is trivial.
Here $F^{\ur}$ is the maximal unramified extension of $F$.
Since $X^{\ast}(Z_{\widehat{\U_{N}}}) \cong \Z$ and $\Gal(F^{\ur}/F)$ acts on it by $(-1)$-multiplication, its fixed part is trivial (see Section \ref{sec:endo}.1 for a description of the dual group of $\U_{N}$ and the Galois action on it).
\end{proof}

For a character $\omega'$ on $\tilde{k}^{1}$, we define an affine generic character $\chi^{\U_{N}}_{\omega',b}$ on $Z_{\U_{N}}I_{\U_{N}}^{+}$ by
\begin{align*}
 \chi^{\U_{N}}_{\omega',b}(z)&=\omega'(\ol{z}) \quad\text{for}\quad z\in Z_{\U_{N}}(q), \text{ and}\\
 \chi^{\U_{N}}_{\omega',b}(y)&= \chi^{\U_{N}}_{b}(y) \quad\text{for}\quad y\in I_{\U_{N}}^{+}.
\end{align*} 
Then we have $N_{\U_{N}(F)}(I_{\U_{N}}^{+};\chi^{\U_{N}}_{\omega',b})=Z_{\U_{N}}I_{\U_{N}}^{+}$.
Let $\pi^{\U_{N}}_{\omega',b}$ be the simple supercuspidal representation of $\U_{N}(F)$ defined by
\[
\pi^{\U_{N}}_{\omega',b}:=\cInd^{\U_{N}(F)}_{Z_{\U_{N}}I_{\U_{N}}^{+}} \chi^{\U_{N}}_{\omega',b}.
\]
Then, for example by using Proposition 2.6 in \cite{Oi:2018}, we can check that the set 
\[
\SSC(\U_{N})
:=
\begin{cases}
\big\{(\omega',b) \mid \omega'\in (\tilde{k}^{1})^{\vee}, b\in k^{\times} \big\} & \text{if }N=2n,\\
\big\{(\omega',b) \mid \omega'\in (\tilde{k}^{1})^{\vee}, b\in (\tilde{k}^{0})^{\times} \big\} & \text{if }N=2n+1
\end{cases}
\]
parametrizes the set of equivalence classes of simple supercuspidal representations of $\U_{N}(F)$.
Here we denote the set of characters on $\tilde{k}^{1}$ by $(\tilde{k}^{1})^{\vee}$.

\section{Characters of simple supercuspidal representations}\label{sec:char}
\subsection{Character, character formula, and their twisted versions}
We first recall the (twisted) characters of representations and the character formula.
Let $\bfG$ be a connected reductive group over $F$, and $\pi$ an irreducible representation of $G:=\bfG(F)$.
Then, by the theorem of Harish-Chandra (\cite{MR0414797}), we have its \textit{character} $\Theta_{\pi}$, which is a locally constant function on the set $G^{\rs}$ of regular semisimple elements of $G$.

For irreducible supercuspidal representations which are obtained by the compact induction, we have the following character formula:
\begin{thm}[Character formula, \cite{MR1039842}]\label{thm:CF}
Let $\bfZ_{\bfG}$ be the center of $\bfG$.
Let $K$ be an open subgroup of $G$ such that $K$ contains $Z_{\bfG}$ and $K/Z_{\bfG}$ is compact.
Let $\rho$ be a finite-dimensional irreducible smooth representation of $K$.
We assume that the representation $\pi:=\cInd_K^{G} \rho$ is irreducible, hence supercuspidal.
Then, for every $g \in G^{\rs}$, we have
\[
\Theta_\pi(g)
=\sum_{x\in K\backslash G/K}\sum_{\begin{subarray}{c}y\in K\backslash KxK \\ ygy^{-1} \in K \end{subarray}} \tr\rho(ygy^{-1}).
\]
In particular, we have
\[
\Theta_\pi(g)
=\sum_{\begin{subarray}{c} y\in K \backslash G\\ ygy^{-1} \in K \end{subarray}} \tr\rho(ygy^{-1}),
\]
provided that the sum is finite.
\end{thm}

We next recall the notion of \textit{twisted character} for $\G_{N}(F)$.
Let $\pi$ be an irreducible smooth representation of $\G_{N}(F)$.
We assume that $\pi$ is $\theta$-stable.
Then, by fixing an isomorphism $I_{\theta}\colon\pi\cong\pi^{\theta}$, we get its $\theta$-twisted character $\Theta_{\pi,\theta}$, which is a locally constant function on the set $\G_{N}^{\trs}(F)$ of $\theta$-regular $\theta$-semisimple elements of $\G_{N}(F)$ (see \cite{MR889110} and \cite{MR3632513} for details).
Note that the $\theta$-twisted character $\Theta_{\pi,\theta}$ depends on the choice of an isomorphism $I_{\theta}$.

Similar to the untwisted case, we have a formula for the twisted characters of supercuspidal representations:
\begin{thm}[Twisted character formula, {\cite[Partie I, Th\'eor\`eme 6.2.1]{MR3632513}}]\label{thm:TCF}
Let $K$ be a $\theta$-stable open subgroup of $\G_{N}(F)$ such that $K$ contains $Z_{\G_{N}}$ and $K/Z_{\G_{N}}$ is compact.
Let $\rho$ be a finite-dimensional $\theta$-stable irreducible smooth representation of $K$.
We fix an isomorphism $I_{\theta}\colon\rho\cong\rho^{\theta}$.
We assume that the representation $\pi:=\cInd_K^{\G_{N}(F)} \rho$ is irreducible, hence supercuspidal.
Then, for every $g \in \G_{N}^{\trs}(F)$, we have
\[
\Theta_{\pi, \theta}(g)
= \sum_{x\in K\backslash \G_{N}(F)/K}\sum_{\begin{subarray}{c} y\in K \backslash KxK\\ yg\theta(y)^{-1} \in K \end{subarray}} \tr\bigl(\rho(yg\theta(y)^{-1})\circ I_{\theta}\bigr).
\]
In particular, we have
\[
\Theta_{\pi, \theta}(g) = \sum_{\begin{subarray}{c} y\in K \backslash \G_{N}(F)\\ yg\theta(y)^{-1} \in K \end{subarray}} \tr\bigl(\rho(yg\theta(y)^{-1})\circ I_{\theta}\bigr),
\]
provided that the sum is finite.
Here we normalize the $\theta$-twisted character $\Theta_{\pi, \theta}$ with respect to the isomorphism $\cInd_K^{\G_{N}(F)} I_{\theta} \colon \pi \ra \pi^\theta$.
\end{thm}

In this paper, for a $\theta$-stable simple supercuspidal representation $\pi_{\omega,a,\zeta}^{\G_{N}}$ of $\G_{N}(F)$, we adopt the normalization 
\[
\cInd_{Z_{\G_{N}}I_{\G_{N}}^{+}\lan\varphi_{a^{-1}}\ran}^{\G_{N}(F)} \mathrm{id}\colon\pi_{\omega,a,\zeta}^{\G_{N}}\cong(\pi_{\omega,a,\zeta}^{\G_{N}})^{\theta}
\]
induced from the identity map $\mathrm{id}\colon\chi_{\omega,a,\zeta}^{\G_{N}}\rightarrow(\chi_{\omega,a,\zeta}^{\G_{N}})^{\theta}$.

\begin{rem}\label{rem:normalization}
By using a $\theta$-stable Whittaker data of $\G_{N}$, we can normalize the $\theta$-twisted character of each $\theta$-stable irreducible smooth representation in a natural way depending only on the Whittaker data (see \cite[Section 3.2]{MR3338302}).
In fact, for a simple supercuspidal representation of $\G_{N}(F)$, such a normalization coincides with the above one.
This can be proved by describing the Whittaker normalization for simple supercuspidal representations explicitly from their constructions (see, for example, \cite[Proposition 5.1]{Oi:2016} for the details).
\end{rem}

Before we start to compute the characters of simple supercuspidal representations, we define the Kloosterman sum, which will be used to express the character values.
\begin{defn}\label{defn:Kl}
Let $m$ and $n$ be non-negative integers.
For $u\in k^{\times}$, we put
\[
\Kl_{u}^{m;n} (\psi)
:= \sum_{\begin{subarray}{c} x_1, \ldots , x_m \in k\\ y_1, \ldots , y_n \in \tilde{k}\\ x_1\cdots x_m \Nr(y_1)\cdots\Nr(y_n)= u \end{subarray}} 
\psi(x_1 + \cdots + x_m)
\tilde{\psi}(y_1 + \cdots + y_n).
\] 
\end{defn}

As a consequence of the Hasse--Davenport relation, we have the following lemma:
\begin{lem}\label{lem:HDKl}
For $n \in \Z_{\geq0}$ and $a\in k^{\times}$, we have
\[
\Kl_{a}^{2;n}(\psi)=-\Kl_{a}^{0:n+1}(\psi).
\]
\end{lem}

\begin{proof}
It suffices to show the equality for their Fourier transforms with respect to every multiplicative character of $k^{\times}$.
Let $\chi$ be a multiplicative character of $k^{\times}$.
By, for example, Proposition A.2 in \cite{Oi:2018}, we have
\[
\sum_{a\in k^{\times}} \chi(a) \Kl_{a}^{2;n}(\psi)
=G(k; \chi, \psi)^{2}\cdot G(\tilde{k}; \chi\circ\Nr, \tilde{\psi})^{n}.
\]
On the other hand, we have
\[
-\sum_{a\in k^{\times}} \chi(a) \Kl_{a}^{0;n+1}(\psi)
=-G(\tilde{k}; \chi\circ\Nr, \tilde{\psi})^{n+1}.
\]
Here, $G(k; \chi, \psi)$ (resp.\ $G(\tilde{k}; \chi\circ\Nr, \tilde{\psi})$) is the Gauss sum with respect to $(k,\chi,\psi)$ (resp.\ $(\tilde{k}, \chi\circ\Nr, \tilde{\psi})$).
Therefore we get the conclusion by the Hasse--Davenport relation (see, for example, \cite[503 page]{MR0029393} or \cite[Chapter 11, Theorem 1]{MR1070716}).
\end{proof}

Finally, we introduce a lemma which is useful to determine the index set of the character formula:
\begin{lem}[Lemma 2.5 in \cite{Oi:2018}]\label{lem:key}
Let $\bfG$, $I_{\bfG}$, and $I_{\bfG}^{+}$ be as in Section $\ref{sec:ssc}.1$.
Let $x$ be an element of $G$.
If $x$ satisfies $xgx^{-1}\in I_{\bfG}$ for an affine generic element $g$, then we have $x\in N_{G}(I_{\bfG})$.
\end{lem}

\subsection{The case of twisted $\G_{N}$}
For $g \in \G_{N}(F)$, we put 
\[
\mathcal{N}(g) := g\theta(g) \in \G_{N}(F).
\]

Let $(\omega,a,\zeta)\in\SSC^{\theta}(\G_{N})$, and we consider the corresponding simple supercuspidal representation $\pi^{\G_{N}}_{\omega,a,\zeta}$.
We denote the $\theta$-twisted character $\Theta_{\pi^{\G_{N}}_{\omega,a,\zeta}, \theta}$ of $\pi^{\G_{N}}_{\omega,a,\zeta}$ simply by $\Theta^{\G_{N}}_{\omega,a,\zeta,\theta}$.

First, we compute the twisted characters at $g \in I_{\G_{N}}^{+}\cap \G_{N}^{\trs}(F)$ such that $\mathcal{N}(g)=g\theta(g) \in I_{\G_{N}}^{+}$ is affine generic.

\begin{lem}\label{lem:sumTG}
Let $g \in I_{\G_{N}}^{+}$ be an element such that $\mathcal{N}(g)$ is affine generic.
If $y \in \G_{N}(F)$ satisfies $yg\theta(y)^{-1} \in Z_{\G_{N}}I_{\G_{N}}^{+}\lan\varphi_{a^{-1}}\ran$, then $y \in Z_{\G_{N}}I_{\G_{N}}\lan\varphi_{a^{-1}}\ran$.
\end{lem}

\begin{proof}
Since $yg\theta(y)^{-1} \in Z_{\G_{N}}I_{\G_{N}}^{+}\lan\varphi_{a^{-1}}\ran$, $\mathcal{N}(yg\theta(y)^{-1}) = y\mathcal{N}(g)y^{-1}$ also belongs to $Z_{\G_{N}}I_{\G_{N}}^{+}\lan\varphi_{a^{-1}}\ran$.
Moreover, as we have
 \[
 \mathrm{val}\circ\det(y\mathcal{N}(g)y^{-1})
 =\mathrm{val}\circ\det(\mathcal{N}(g))=0,
 \]
$y\mathcal{N}(g)y^{-1}$ lies in $Z_{\G_{N}}(q)I_{\G_{N}}^{+}\subset I_{\G_{N}}$
(note that we have $\mathrm{val}\circ\det(\varphi_{a^{-1}})=1$).
Then, by the assumption and Lemma \ref{lem:key}, $y$ must lie in $N_{\G_{N}(F)}(I_{\G_{N}})=Z_{\G_{N}}I_{\G_{N}}\langle \varphi_{a^{-1}} \rangle$.
\end{proof}

\begin{lem}\label{lem:sumTG2}
Let $g \in I_{\G_{N}}^{+}$ be an element such that $\mathcal{N}(g)$ is affine generic.
Then a system of representatives of the set 
\[
\big\{ y \in Z_{\G_{N}}I_{\G_{N}}^{+}\lan\varphi_{a^{-1}}\ran\backslash Z_{\G_{N}}I_{\G_{N}}\lan\varphi_{a^{-1}}\ran \mid yg\theta(y)^{-1} \in Z_{\G_{N}}I_{\G_{N}}^{+}\lan\varphi_{a^{-1}}\ran\big\}
\]
is given by 
\[
T'_{\G_{N}}(q)
:= \{ \diag(t_1, \ldots, t_{N})\in T_{\G_{N}}(q) \mid t_1 c(t_{N})=\cdots=t_{N}c(t_1), t_{n}=1\},
\]
where we put $n=\lfloor\frac{N+1}{2}\rfloor$.
\end{lem}

\begin{proof} 
Let $y := \diag(t_1, \ldots, t_{N}) \in T_{\G_{N}}(q)$ satisfying $t_n=1$ (note that the set $Z_{\G_{N}}I_{\G_{N}}^{+}\lan\varphi_{a^{-1}}\ran\backslash Z_{\G_{N}}I_{\G_{N}}\lan\varphi_{a^{-1}}\ran$ is represented by 
\[
\{\diag(t_1, \ldots, t_{N}) \in T_{\G_{N}}(q)\mid t_{n}=1\}).
\]
From the same argument as in the proof of Lemma \ref{lem:sumTG}, if $yg\theta(y)^{-1}$ belongs to $Z_{\G_{N}}I_{\G_{N}}^{+}\lan\varphi_{a^{-1}}\ran$, then it in fact lies in $ Z_{\G_{N}}(q)I_{\G_{N}}^{+}$. 
Thus we have $yg\theta(y)^{-1} \in Z_{\G_{N}}I_{\G_{N}}^{+}\lan\varphi_{a^{-1}}\ran$ if and only if 
the diagonal part of 
\begin{align*}
yg\theta (y)^{-1} &= 
\diag(t_{1}, \ldots, t_{N})
\begin{pmatrix}
g_{1, 1} & \hdots & g_{1, N} \\
\vdots & \ddots & \vdots \\
g_{N, 1} & \hdots & g_{N, N}
\end{pmatrix}
\diag\bigl(c(t_{N}), \ldots, c(t_{1})\bigr)
\\
&= 
\begin{pmatrix}
 t_1c(t_{N})g_{1, 1}&t_1c(t_{N-1})g_{1, 2}&&\\
 &t_2c(t_{N-1})g_{2, 2}&\ddots&\ast\\
 &\ast&\ddots&t_{N-1}c(t_1)g_{N-1, N}\\
 t_{N}c(t_{N})g_{N, 1}&&&t_{N}c(t_1)g_{N, N}
\end{pmatrix}
\end{align*}
lies in $Z_{\G_{N}}(q)T_{\G_{N}}^{1}$, and this is equivalent to that $t_1c(t_{N})=\cdots=t_{N}c(t_1)$.
\end{proof}

\begin{prop}[Even case: $N=2n$]\label{prop:charTG}
Let $g \in I_{\G_{2n}}^{+}\cap \G_{2n}^{\trs}(F)$ be an element such that $\mathcal{N}(g)$ is affine generic.
Let $(g_1, \ldots, g_{2n})$ be the simple affine components of $g$.
Then we have 
\[
\Theta^{\G_{2n}}_{\omega,1,\zeta,\theta}(g) = 
-\Kl^{0;n}_{\Nr(g_1+c(g_{2n-1}))\cdots \Nr(g_{n-1}+c(g_{n+1}))\Tr(g_{n})\Tr(g_{2n})}(\psi).
\]
\end{prop}

\begin{proof}
Note that the simple affine components of $\mathcal{N}(g)$ are given by 
\[
\bigl(g_1+c(g_{2n-1}), \ldots, g_{2n-1}+c(g_1), \Tr(g_{2n})\bigr),
\]
and the affine genericity of $\mathcal{N}(g)$ means that none of them is zero.

From Lemma \ref{lem:sumTG}, the index set of the twisted character formula (Theorem \ref{thm:TCF}) is finite, and given by $T'_{\G_{2n}}(q)$ from Lemma \ref{lem:sumTG2}.
Thus, by noting that the element $yg\theta(y)^{-1}$ belongs to 
\[
\diag(\alpha,\ldots,\alpha)I_{\G_{2n}^{+}},
\]
where $\alpha=t_1 c(t_{2n})=\cdots=t_{2n}c(t_1)\in k^{\times}$, and that the character $\omega$ is trivial on $k^{\times}$, we can compute the twisted character as follows: 
\begin{align*}
&\Theta^{\G_{2n}}_{\omega,1,\zeta,\theta}(g)
= \sum_{y \in T'_{\G_{2n}}(q)} \chi^{\G_{2n}}_{\omega,1,\zeta}\bigl(yg\theta(y)^{-1}\bigr) \\
&= \sum_{\alpha \in k^{\times}} \omega(\alpha) \sum_{\begin{subarray}{c} t_1, \ldots, t_{2n} \in \tilde{k}^{\times}\\ t_i c(t_{2n+1-i}) =\alpha \\ t_n=1 \end{subarray}} 
\tilde{\psi}\left(\frac{t_1 c(t_{2n-1}) g_1}{\alpha} + \cdots + \frac{t_{2n-1} c(t_1) g_{2n-1}}{\alpha} + \frac{\Nr(t_{2n}) g_{2n}}{\alpha}\right) \\
&=\sum_{\alpha \in k^{\times}} \sum_{t_1, \ldots, t_{n-1} \in \tilde{k}^{\times}}
\tilde{\psi}\left(\frac{t_1}{t_2}g_1 + \cdots + c\left(\frac{t_1}{t_2}\right)g_{2n-1} + \frac{\alpha}{\Nr(t_1)}g_{2n}\right) \\
&= \sum_{\begin{subarray}{c}t_1, \ldots, t_{n-1} \in \tilde{k}^{\times}\\ \alpha\in k^{\times}\end{subarray}} 
\tilde{\psi} \left( \frac{t_1}{t_2}\bigl(g_1+c(g_{2n-1})\bigr) + \cdots +\frac{t_{n-1}}{1}\bigl(g_{n-1}+c(g_{n+1})\bigr) + \frac{g_n}{\alpha} + \frac{\alpha}{\Nr(t_1)}g_{2n} \right) \\
&=\Kl^{2;n-1}_{\Nr(g_1+c(g_{2n-1}))\cdots \Nr(g_{n-1}+c(g_{n+1}))\Tr(g_{n})\Tr(g_{2n})}(\psi)\\
&=-\Kl^{0;n}_{\Nr(g_1+c(g_{2n-1}))\cdots \Nr(g_{n-1}+c(g_{n+1}))\Tr(g_{n})\Tr(g_{2n})}(\psi).
\end{align*}
Here, we used Lemma \ref{lem:HDKl} in the last equality.
\end{proof}

\begin{prop}[Odd case: $N=2n+1$]\label{prop:charTG_{o}}
Let $g \in I_{\G_{2n+1}}^{+}\cap \G_{2n+1}^{\trs}(F)$ be an element such that $\mathcal{N}(g)$ is affine generic.
Let $(g_1, \ldots, g_{2n+1})$ be the simple affine components of $g$.
Then we have 
\[
\Theta^{\G_{2n+1}}_{\omega,\epsilon,\zeta,\theta}(g) = 
\Kl^{1;n}_{\Nr(g_1+c(g_{2n}))\cdots \Nr(g_{n}+c(g_{n+1}))\Tr(\epsilon g_{2n+1})}(\psi).
\]
\end{prop}

\begin{proof}
Note that the simple affine components of $\mathcal{N}(g)$ are given by 
\[
\bigl(g_1+c(g_{2n}), \ldots, g_{2n}+c(g_1), g_{2n+1}-c(g_{2n+1})\bigr),
\]
and the affine genericity of $\mathcal{N}(g)$ means that none of them is zero.

From Lemma \ref{lem:sumTG}, the index set of the twisted character formula (Theorem \ref{thm:TCF}) is finite, and given by $T'_{\G_{2n+1}}(q)$ from Lemma \ref{lem:sumTG2}.
Thus we can compute the twisted character as follows: 
\begin{align*}
&\Theta^{\G_{2n+1}}_{\omega,\epsilon,\zeta,\theta}(g)
= \sum_{y \in T'_{\G_{2n+1}}(q)} \chi^{\G_{2n+1}}_{\omega,\epsilon,\zeta}\bigl(yg\theta(y)^{-1}\bigr) \\
&= \sum_{\begin{subarray}{c} t_1, \ldots, t_{2n+1} \in \tilde{k}^{\times}\\ t_i c(t_{2n+2-i}) =1\\ t_{n+1}=1\end{subarray}} 
\tilde{\psi}\bigl(t_1 c(t_{2n}) g_1 + \cdots + t_{2n} c(t_1) g_{2n} + \epsilon \Nr(t_{2n+1}) g_{2n+1}\bigr) \\
&= \sum_{t_1, \ldots, t_{n} \in \tilde{k}^{\times}}
\tilde{\psi}\left(\frac{t_1}{t_2}g_1 + \cdots + c\left(\frac{t_1}{t_2}\right)g_{2n} + \frac{\epsilon}{\Nr(t_1)}g_{2n+1}\right) \\
&= \sum_{t_1, \ldots, t_{n} \in \tilde{k}^{\times}}\tilde{\psi} \left( \frac{t_1}{t_2}\bigl(g_1+c(g_{2n})\bigr) + \cdots +\frac{t_{n}}{1}\bigl(g_{n}+c(g_{n+1})\bigr) + \frac{\epsilon}{\Nr(t_1)}g_{2n+1} \right) \\
&=\Kl^{1;n}_{\Nr(g_1+c(g_{2n}))\cdots \Nr(g_{n}+c(g_{n+1}))\Tr(\epsilon g_{2n+1})}(\psi).
\end{align*}
\end{proof}

Next, we compute the twisted characters $\Theta^{\G_{N}}_{\omega,a,\zeta,\theta}$ at $\varphi_{a^{-1}u}g$, where $g \in I_{\G_{N}}^{+}$ and $u \in k^{\times}$ such that $-\mathcal{N}(\varphi_{a^{-1}u} g)=\varphi_{a^{-1}u} g\varphi_{a^{-1}u}^{-1}\theta(g) \in I_{\G_{N}}^{+}$ is affine generic.

\begin{lem}\label{lem:sumTG'}
Let $u\in k^{\times}$.
Let $g \in I_{\G_{N}}^{+}$ be an element such that $-\mathcal{N}(\varphi_{a^{-1}u} g) \in I_{\G_{N}}^{+}$ is affine generic.
If $y \in \G_{N}(F)$ satisfies $y\varphi_{a^{-1}u} g\theta(y)^{-1} \in Z_{\G_{N}}I_{\G_{N}}^{+}\lan\varphi_{a^{-1}}\ran$, then $y \in Z_{\G_{N}}I_{\G_{N}}\lan\varphi_{a^{-1}}\ran$.
\end{lem}

\begin{proof}
This follows from the exactly same argument as in the proof of Lemma \ref{lem:sumTG}.
\end{proof}

\begin{lem}\label{lem:sumTG'2}
Let $u\in k^{\times}$.
Let $g \in I_{\G_{N}}^{+}$ be an element such that $-\mathcal{N}(\varphi_{a^{-1}u} g) \in I_{\G_{N}}^{+}$ is affine generic.
Then a system of representatives of the set 
\[
\big\{ y \in Z_{\G_{N}}I_{\G_{N}}^{+}\lan\varphi_{a^{-1}}\ran\backslash Z_{\G_{N}}I_{\G_{N}}\lan\varphi_{a^{-1}}\ran \mid y\varphi_{a^{-1}u} g\theta (y)^{-1} \in Z_{\G_{N}}I_{\G_{N}}^{+}\lan\varphi_{a^{-1}}\ran\big\}
\]
is given by 
\[
T''_{\G_{N}}(q)
:= \{ \diag(t_1, \ldots, t_{N})\in T_{\G_{N}}(q) \mid t_1c(t_{N-1})= \cdots = t_{N-1}c(t_1) = u, t_{N}=1 \}.
\]
\end{lem}

\begin{proof}
Let $y = \diag(t_1, \ldots, t_{N}) \in T_{\G_{N}}(q)$ satisfying $t_{N}=1$.
Since 
\[
\mathrm{val}\circ\det\bigl(y\varphi_{a^{-1}u} g\theta (y)^{-1}\bigr)=\mathrm{val}\circ\det(\varphi_{a^{-1}u})=1,
\]
we have
\[
y\varphi_{a^{-1}u} g\theta (y)^{-1} \in Z_{\G_{N}}I_{\G_{N}}^{+}\lan\varphi_{a^{-1}}\ran 
\implies
\varphi_{a^{-1}}^{-1}y\varphi_{a^{-1}u} g\theta (y)^{-1} \in Z_{\G_{N}}(q)I_{\G_{N}}^{+}.
\] 
Thus $y\varphi_{a^{-1}u} g\theta (y)^{-1}$ belongs to $Z_{\G_{N}}I_{\G_{N}}^{+}\lan\varphi_{a^{-1}}\ran$ if and only if the diagonal part of
\[
\varphi_{a^{-1}}^{-1}y\varphi_{a^{-1}u} \cdot g \cdot \theta (y)^{-1} 
\]
\begin{align*}
&=
\diag(t_{N}u, t_1, \ldots, t_{N-1})
\begin{pmatrix}
g_{1, 1}&\hdots&g_{1, N}\\
\vdots&\ddots&\vdots\\
g_{N, 1}&\hdots&g_{N, N}
\end{pmatrix}
\diag\bigl(c(t_{N}), \ldots, c(t_{1})\bigr)
\\
&= 
\begin{pmatrix}
 t_{N}c(t_{N})ug_{1, 1}&t_{N}c(t_{N-1})ug_{1, 2}&&\\
 &t_1c(t_{N-1})g_{2, 2}&\ddots&\ast\\
 &\ast&\ddots&t_{N-2}c(t_1)g_{N-1, N}\\
 t_{N-1}c(t_{N})g_{N, 1}&&&t_{N-1}c(t_1)g_{N, N}
\end{pmatrix}
\end{align*}
lies in $Z_{\G_{N}}(q)T_{\G_{N}}^{1}$, and this is equivalent to that $t_1c(t_{N-1})= \cdots = t_{N-1}c(t_1)= u$.
\end{proof}

\begin{prop}[Even case: $N=2n$]\label{prop:charTG'}
Let $g \in I_{\G_{2n}}^{+}$ be an element such that $\varphi_ug \in \G_{2n}^{\trs}(F)$ and $-\mathcal{N}(\varphi_u g)$ is affine generic.
Let $(g_1, \ldots, g_{2n})$ be the simple affine components of $g$.
Then we have 
\[
\Theta^{\G_{2n}}_{\omega,1,\zeta,\theta} (\varphi_u g) =
 \zeta\cdot \Kl^{0;n}_{\Nr(ug_1+c(g_{2n}))\Nr(g_2+c(g_{2n-1}))\cdots\Nr(g_{n}+c(g_{n+1}))/u}(\psi).
\]
\end{prop}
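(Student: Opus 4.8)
The plan is to follow the proof of Proposition \ref{prop:charTG}, now for elements of the form $\varphi_u g$ (so that the relevant matrices pick up a $\varphi=\varphi_1$-component, which accounts for the factor $\zeta$). I begin by recording the affine simple components of $-N(\varphi_u g)=\varphi_u g\varphi_u^{-1}\theta(g)$: a direct computation shows that $\varphi_u g\varphi_u^{-1}$ has affine simple components $(g_2,\ldots,g_{2n-1},u^{-1}g_{2n},ug_1)$, and adding to these the components $(c(g_{2n-1}),\ldots,c(g_1),c(g_{2n}))$ of $\theta(g)$ gives
\[
\bigl(g_2+c(g_{2n-1}),\,g_3+c(g_{2n-2}),\,\ldots,\,g_{2n-1}+c(g_2),\,u^{-1}g_{2n}+c(g_1),\,ug_1+c(g_{2n})\bigr).
\]
These entries are permuted by conjugation in pairs (together with the relation $u^{-1}g_{2n}+c(g_1)=u^{-1}c(ug_1+c(g_{2n}))$), so affine genericity of $-N(\varphi_u g)$ amounts precisely to $ug_1+c(g_{2n})\neq0$ and $g_{i+1}+c(g_{2n-i})\neq0$ for $1\le i\le n-1$; these are exactly the quantities occurring in the asserted formula.

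Next I apply the twisted character formula (Theorem \ref{thm:TCF-U}) with $K=ZI^+\langle\varphi\rangle$, $\rho=\chi$ and $A=\mathrm{id}$, and invoke Lemmas \ref{lem:sumTG'} and \ref{lem:sumTG'2} to reduce to
\[
\Theta_{\pi,\theta}(\varphi_u g)=\sum_{y\in Z''_S(q)}\chi\bigl(y\varphi_u g\theta(y)^{-1}\bigr).
\]
For $y=\diag(t_1,\ldots,t_{2n})\in Z''_S(q)$, the matrix displayed in the proof of Lemma \ref{lem:sumTG'2} shows, using $t_{2n}=1$ and $t_kc(t_{2n-k})=u$ for all $k$, that every diagonal entry of $m:=\varphi^{-1}y\varphi_u g\theta(y)^{-1}$ is $\equiv u\pmod{\mfp_E}$; hence $y\varphi_u g\theta(y)^{-1}=\varphi\cdot(uI_{2n})\cdot(u^{-1}m)$ with $u^{-1}m\in I^+$, and
\[
\chi\bigl(y\varphi_u g\theta(y)^{-1}\bigr)=\zeta\cdot\omega(\bar u)\cdot\psi_1(u^{-1}m)=\zeta\cdot\psi_1(u^{-1}m),
\]
since $\omega(\bar u)=1$ by conjugate self-duality of $\omega$ (triviality on $\Nr(k_{E}^{\times})=k_{F}^{\times}$). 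Reading the superdiagonal entries of $m$ off the same matrix and simplifying with the defining relations $t_kc(t_{2n-k})=u$ of $Z''_S(q)$, the affine simple components of $u^{-1}m$ come out as $\bigl(\tfrac{u}{t_1}g_1,\,\tfrac{t_1}{t_2}g_2,\,\ldots,\,\tfrac{t_{2n-2}}{t_{2n-1}}g_{2n-1},\,\tfrac{1}{c(t_1)}g_{2n}\bigr)$; then, expressing the dependent coordinates $t_{n+1},\ldots,t_{2n-1}$ through the free ones $t_1,\ldots,t_{n-1}$ and through $t_n$ (with $\Nr(t_n)=u$), and using $\Tr=\Tr\circ c$ to merge the $g_1$-term with the $g_{2n}$-term and to pair the $g_{i+1}$-term with the $g_{2n-i}$-term, I expect this to become
\[
\psi_1(u^{-1}m)=\psi\circ\Tr\Bigl(\tfrac{1}{t_1}\bigl(ug_1+c(g_{2n})\bigr)+\sum_{i=1}^{n-1}\tfrac{t_i}{t_{i+1}}\bigl(g_{i+1}+c(g_{2n-i})\bigr)\Bigr).
\]

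Finally I would parametrize $Z''_S(q)$ by letting $t_1,\ldots,t_{n-1}$ range freely over $k_{E}^{\times}$ and $t_n$ over $\{t\in k_{E}^{\times}\mid\Nr(t)=u\}$, substitute $b_0=t_1^{-1}$ and $b_i=t_i/t_{i+1}$ for $1\le i\le n-1$ (so that, since $t_n=(b_0\cdots b_{n-1})^{-1}$, the tuple $(b_0,\ldots,b_{n-1})$ ranges over all of $(k_{E}^{\times})^n$ with $\Nr(b_0\cdots b_{n-1})=u^{-1}$), and then put $x_0=b_0\bigl(ug_1+c(g_{2n})\bigr)$ and $x_i=b_i\bigl(g_{i+1}+c(g_{2n-i})\bigr)$. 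Affine genericity of $-N(\varphi_u g)$ makes this last substitution a bijection of $(k_{E}^{\times})^n$ onto itself, and $\Nr(x_0\cdots x_{n-1})=u^{-1}\Nr(ug_1+c(g_{2n}))\Nr(g_2+c(g_{2n-1}))\cdots\Nr(g_n+c(g_{n+1}))$, so the sum becomes $\zeta$ times the Kloosterman sum of Definition \ref{defn:Kl2} with exactly the stated parameter. I expect the main obstacle to be the middle bookkeeping step---verifying that the relations defining $Z''_S(q)$ really do collapse the coefficients of the affine simple components of $u^{-1}m$ to $\tfrac{t_i}{t_{i+1}}$ and its conjugate, so that the $\Tr=\Tr\circ c$ pairing produces precisely the combinations $g_{i+1}+c(g_{2n-i})$ matching the affine genericity hypothesis; otherwise the argument runs in parallel with Proposition \ref{prop:charTG}.
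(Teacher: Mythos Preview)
Your proposal is correct and follows essentially the same approach as the paper: apply the twisted character formula, restrict the sum to $Z''_S(q)$ via Lemmas \ref{lem:sumTG'} and \ref{lem:sumTG'2}, factor $y\varphi_u g\theta(y)^{-1}$ as $\varphi\cdot u\cdot(u^{-1}m)$, use the relations $t_ic(t_{2n-i})=u$ to simplify the affine simple components, pair terms via $\Tr=\Tr\circ c$, and identify the result as the stated Kloosterman sum. Your treatment is in fact slightly more detailed than the paper's in two places---you make explicit the step $\omega(\bar u)=1$ from conjugate self-duality (which the paper absorbs silently into the passage from $\chi(\varphi^{-1}y\varphi_u g\theta(y)^{-1})$ to the $\psi\circ\Tr$ expression), and you spell out the final change of variables that recognizes the Kloosterman sum---but the argument is the same.
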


\begin{proof}
Note that the simple affine components of $-\mathcal{N}(\varphi_{u}g)$ are given by 
\[
\bigl(g_2+c(g_{2n-1}), g_3+c(g_{2n-2}), \ldots, g_{2n-1}+c(g_2), u^{-1}g_{2n}+c(g_1), ug_1+c(g_{2n})\bigr),
\]
and the affine genericity of $-N(\varphi_{u}g)$ means that none of them is zero.

From Lemma \ref{lem:sumTG'}, the index set of the twisted character formula (Theorem \ref{thm:TCF}) is finite, and given by $T''_{\G_{2n}}(q)$ from Lemma \ref{lem:sumTG'2}.
Thus, by noting that the element $\varphi_{1}^{-1}y\varphi_{u} g\theta(y)^{-1}$ belongs to 
\[
\diag(u,\ldots,u)I_{\G_{2n}^{+}}
\]
and that the character $\omega$ is trivial on $k^{\times}$, we can compute the twisted character as follows: 
\begin{align*}
& \Theta^{\G_{2n}}_{\omega,1,\zeta,\theta} (\varphi_u g)
= \sum_{y \in T''_{\G_{2n}}(q)} \chi^{\G_{2n}}_{\omega,1,\zeta}(\varphi_{1})\cdot\chi^{\G_{2n}}_{\omega,1,\zeta}\bigl(\varphi_{1}^{-1}y\varphi_{u} g\theta(y)^{-1}\bigr) \\
&= \zeta\omega(u) \sum_{\begin{subarray}{c} t_1, \ldots, t_{2n} \in \tilde{k}^{\times}\\ t_i c(t_{2n-i}) =u\\ t_{2n} = 1 \end{subarray}} 
\tilde{\psi} \left( \frac{t_{2n}c(t_{2n-1})ug_1}{u} + \cdots + \frac{t_{2n-2}c(t_1)g_{2n-1}}{u} + \frac{t_{2n-1}c(t_{2n})g_{2n}}{u} \right) \\
&= \zeta \sum_{\begin{subarray}{c} t_1, \ldots, t_{n} \in \tilde{k}^{\times}\\ \Nr(t_n)=u\end{subarray}} 
\tilde{\psi} \left( \frac{u}{t_1}g_1 + \cdots + c\biggl(\frac{t_1}{t_2}\biggr)g_{2n-1} + \frac{g_{2n}}{c(t_1)}\right) \\
&= \zeta \sum_{\begin{subarray}{c} t_1, \ldots, t_{n} \in \tilde{k}^{\times}\\ \Nr(t_n)=u\end{subarray}} 
\tilde{\psi} \left( \frac{1}{t_1}\bigl(ug_1+c(g_{2n})\bigr) + \frac{t_1}{t_2}\bigl(g_2+c(g_{2n-1})\bigr) + \cdots + \frac{t_{n-1}}{t_n}\bigl(g_{n}+c(g_{n+1})\bigr)\right) \\
&= \zeta\cdot \Kl^{0;n}_{\Nr(ug_1+c(g_{2n}))\Nr(g_2+c(g_{2n-1}))\cdots\Nr(g_{n}+c(g_{n+1}))/u}(\psi).
\end{align*}
\end{proof}

\begin{prop}[Odd case: $N=2n+1$]\label{prop:charTG'_{o}}
Let $g \in I_{\G_{2n+1}}^{+}$ be an element such that $\varphi_{\epsilon^{-1}u}g \in \G_{2n+1}^{\trs}(F)$ and $-\mathcal{N}(\varphi_{\epsilon^{-1}u} g)$ is affine generic.
Let $(g_1, \ldots, g_{2n+1})$ be the simple affine components of $g$.
Then we have 
\[
\Theta^{\G_{2n+1}}_{\omega,\epsilon,\zeta,\theta} (\varphi_{\epsilon^{-1} u} g) = \zeta \cdot \Kl^{1;n}_{\Nr(ug_1-\epsilon c(g_{2n+1}))\Nr(g_2+c(g_{2n}))\cdots\Nr(g_{n}+c(g_{n+2}))\Tr(g_{n+1})/u}(\psi).
\]
\end{prop}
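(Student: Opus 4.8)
The plan is to run the proof of Proposition \ref{prop:charTG'} in the odd case; the only structural changes are that $\theta$ now acts on $I^+/I^{++}\cong k_E^{\oplus 2n+1}$ with a sign on the last coordinate and that the ``middle'' index becomes $n+1$. By the twisted character formula (Theorem \ref{thm:TCF-U}) applied with the system of representatives $Z''_S(q)$ of Lemma \ref{lem:sumTG'2_{o}}, and since $\chi(\varphi_{\epsilon^{-1}})=\zeta$ by definition of $\chi=\chi_{\epsilon,\zeta,\omega}$, we obtain
\[
\Theta_{\pi,\theta}(\varphi_{\epsilon^{-1}u}g)=\zeta\sum_{y\in Z''_S(q)}\chi\bigl(\varphi_{\epsilon^{-1}}^{-1}\,y\,\varphi_{\epsilon^{-1}u}\,g\,\theta(y)^{-1}\bigr),
\]
so the whole computation reduces to feeding the diagonal entries of $\varphi_{\epsilon^{-1}}^{-1}y\varphi_{\epsilon^{-1}u}g\theta(y)^{-1}$, already written out in the proof of Lemma \ref{lem:sumTG'2_{o}}, into $\psi\circ\Tr$ and summing.

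First I would note, as at the start of the proof of Proposition \ref{prop:charTG'}, the affine simple components of $-N(\varphi_{\epsilon^{-1}u}g)$: a short matrix computation --- the same cyclic shift and rescaling as in the even case, combined with the fact that $\theta$ acts on the $(2n+1)$st coordinate of $I^+/I^{++}$ by $x_{2n+1}\mapsto -c(x_{2n+1})$ --- shows that these are, up to the evident conjugate symmetry, the quantities $ug_1-\epsilon c(g_{2n+1})$, $g_2+c(g_{2n})$, $\ldots$, $g_n+c(g_{n+2})$, together with a trace-type factor at the middle index; hence affine genericity of $-N(\varphi_{\epsilon^{-1}u}g)$ is exactly the nonvanishing of $ug_1-\epsilon c(g_{2n+1})$, of each $g_i+c(g_{2n+2-i})$ for $2\le i\le n$, and of $\Tr(g_{n+1})$, i.e.\ of all the ingredients of the claimed Kloosterman subscript. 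Next I would carry out the reduction of the constrained sum over $\{(t_i)\in(k_E^\times)^{2n+1}:t_ic(t_{2n+1-i})=u,\ t_{2n+1}=1\}$: rewriting each summand through the ratios $t_i/t_{i+1}$ and through $\Nr(t_1)$ collapses it to a sum over $t_1,\dots,t_n$ subject to a single norm condition, after which each conjugate pair $g_i,c(g_{2n+2-i})$ fuses into one of the quantities above, the middle coordinate $g_{n+1}$ produces the factor $\Tr(g_{n+1})$, and the rescaling by $u$ introduces the overall $u^{-1}$. Comparing the outcome with Definition \ref{defn:Kl2} gives $\zeta\cdot\Kl^{n,1}$ with precisely the stated subscript; unlike the even twisted case, no Hasse--Davenport reduction is needed here, exactly as in Proposition \ref{prop:charTG_{o}}.

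I expect the only genuine obstacle to be the sign and Galois-conjugation bookkeeping. One must verify that the extra minus sign carried by $\theta$ on the $(2n+1)$st coordinate propagates so as to produce $ug_1-\epsilon c(g_{2n+1})$ (and not a wrong sign, nor a misplaced $\epsilon$), and that the two rescalings coming from $\varphi_{\epsilon^{-1}u}$ on the left and $\varphi_{\epsilon^{-1}}^{-1}$ on the further left land on the correct entries, so that the final normalization is exactly division by $u$. All of this care is already concentrated in the explicit diagonal recorded in the proof of Lemma \ref{lem:sumTG'2_{o}}; once that is granted, the remaining manipulations are word-for-word those in the proofs of Propositions \ref{prop:charTG'} and \ref{prop:charTG_{o}}.
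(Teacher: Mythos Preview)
Your proposal is correct and follows the paper's own proof essentially word for word: apply the twisted character formula via Lemma \ref{lem:sumTG'2_{o}}, pull out $\chi(\varphi_{\epsilon^{-1}})=\zeta$, expand the superdiagonal entries, and collapse to $\Kl^{n,1}$. One small correction to your description of the intermediate step: after eliminating $t_{n+1},\dots,t_{2n}$ via $t_ic(t_{2n+1-i})=u$ (with $t_{2n+1}=1$), the remaining sum over $t_1,\dots,t_n\in k_E^\times$ carries \emph{no} residual norm constraint (unlike the even case, where $\Nr(t_n)=u$ survives); instead the middle term $\tfrac{\Nr(t_n)}{u}g_{n+1}$ supplies the single $k_F^\times$-variable in $\Kl^{n,1}$, which is why no Hasse--Davenport step is needed.
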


\begin{proof}
Note that the simple affine components of $-\mathcal{N}(\varphi_{\epsilon^{-1}u}g)$ are given by 
\[
\bigl(g_2+c(g_{2n}), g_3+c(g_{2n-1}), \ldots, g_{2n}+c(g_2), \epsilon u^{-1}g_{2n+1}+c(g_1), \epsilon^{-1}ug_1-c(g_{2n+1})\bigr),
\]
and the affine genericity of $\mathcal{N}(g)$ means that none of them is zero.

From Lemma \ref{lem:sumTG'}, the index set of the twisted character formula (Theorem \ref{thm:TCF}) is finite, and given by $T''_{\G_{2n+1}}(q)$ from Lemma \ref{lem:sumTG'2}.
Thus, by noting that the element $\varphi_{\epsilon^{-1}}^{-1}y\varphi_{\epsilon^{-1}u} g\theta(y)^{-1}$ belongs to 
\[
\diag(u,\ldots,u)I_{\G_{2n+1}^{+}}
\]
and that the character $\omega$ is trivial on $k^{\times}$, we can compute the twisted character as follows: 
\begin{align*}
& \Theta^{\G_{2n+1}}_{\omega,\epsilon,\zeta,\theta}(\varphi_{\epsilon^{-1}u} g) 
= \sum_{y \in T''_{\G_{2n+1}}(q)} \chi^{\G_{2n+1}}_{\omega,\epsilon,\zeta}(\varphi_{\epsilon^{-1}})\cdot\chi^{\G_{2n+1}}_{\omega,\epsilon,\zeta}\bigl(\varphi_{\epsilon^{-1}}^{-1}y\varphi_{\epsilon^{-1}u} g\theta(y)^{-1}\bigr) \\
&= \zeta\omega(u) \sum_{\begin{subarray}{c} t_1, \ldots, t_{2n+1} \in \tilde{k}^{\times}\\ t_i c(t_{2n+1-i}) =u\\ t_{2n+1} = 1 \end{subarray}} 
\tilde{\psi} \left( \frac{t_{2n+1}c(t_{2n})ug_1}{u} + \cdots + \frac{t_{2n-1}c(t_1)g_{2n}}{u} + \epsilon\frac{t_{2n}c(t_{2n+1})g_{2n+1}}{u} \right) \\
&= \zeta \sum_{t_1, \ldots, t_{n} \in \tilde{k}^{\times}}
\tilde{\psi} \left( \frac{u}{t_1}g_1 + \cdots + c\left(\frac{t_1}{t_2}\right)g_{2n} + \epsilon\frac{g_{2n+1}}{c(t_1)}\right) \\
&= \zeta \sum_{t_1, \ldots, t_{n} \in \tilde{k}^{\times}}
\tilde{\psi} \left( \frac{1}{t_1}\bigl(ug_1-\epsilon c(g_{2n+1})\bigr) + \frac{t_1}{t_2}\bigl(g_2+c(g_{2n})\bigr) + \cdots + \frac{\Nr(t_{n})}{u}g_{n+1}\right) \\
&=\zeta\cdot \Kl^{1;n}_{\Nr(ug_1-\epsilon c(g_{2n+1}))\Nr(g_2+c(g_{2n}))\cdots\Nr(g_{n}+c(g_{n+2}))\Tr(g_{n+1})/u}(\psi).
\end{align*}
\end{proof}

\subsection{The case of $\U_{N}$}
Let $(\omega',b)\in\SSC(\U_{N})$, and we consider the corresponding simple supercuspidal representation $\pi^{\U_{N}}_{\omega',b}$.

We compute the character of $\pi^{\U_{N}}_{\omega',b}$ at an affine generic element $h \in I_{\U_{N}}^{+}\cap \U_{N}^{\rs}(F)$.

\begin{lem}\label{lem:sumU}
Let $h \in I_{\U_{N}}^{+}$ be an affine generic element.
If $y \in \U_{N}(F)$ satisfies $yhy^{-1} \in Z_{\U_{N}}I_{\U_{N}}^{+}$, then $y \in I_{\U_{N}}$.
\end{lem}

\begin{proof}
If $yhy^{-1}$ belongs to $Z_{\U_{N}}I_{\U_{N}}^{+} \subset I_{\U_{N}}$, then $y$ lies in $N_{\U_{N}(F)}(I_{\U_{N}})=I_{\U_{N}}$ by Lemma \ref{lem:key}.
\end{proof}

\begin{prop}[Even case: $N=2n$]\label{prop:charU}
Let $h \in I_{\U_{2n}}^{+}\cap \U_{2n}^{\rs}(F)$ be an affine generic element with its simple affine components $(h_1, \ldots, h_{n-1}, h_n, h_{0})$.
Then we have 
\[
\Theta_{\omega',b}^{\U_{2n}}(h)
= -\Kl^{0;n}_{\Nr(h_1)\cdots \Nr(h_{n-1})h_nh_{0}b}(\psi).
\]
\end{prop}

\begin{proof}
We fix a set $[\tilde{k}^{\times}/\tilde{k}^{1}]\subset \tilde{k}^{\times}$ of representatives of $\tilde{k}^{\times}/\tilde{k}^{1}$.
Then, by Lemma \ref{lem:sumU}, the index set of the character formula (Theorem \ref{thm:CF}) is finite and represented by
\[
T'_{\U_{2n}}(q):=
\{\diag(t_{1},\ldots,t_{n},c(t_{n})^{-1},\ldots,c(t_{1})^{-1})\mid t_{1},\ldots,t_{n-1}\in\tilde{k}^{\times}, t_{n}\in[\tilde{k}^{\times}/\tilde{k}^{1}]\}.
\]
Therefore, by noting that we have
\[
\tilde{k}^{\times}/\tilde{k}^{1}\cong k;\quad z\mapsto \Nr(z)=zc(z),
\]
we can compute the character as follows:
\begin{align*}
&\Theta_{\omega',b}^{\U_{2n}}(h) = \sum_{t \in T'_{\U_{2n}}(q)} \chi_{\omega',b}^{\U_{2n}}(tht^{-1}) \\
&= \sum_{\begin{subarray}{c}t_1, \ldots, t_{n-1} \in \tilde{k}^{\times} \\ t_n \in [\tilde{k}^{\times}/\tilde{k}^{1}] \end{subarray}} \tilde{\psi} \left( \frac{t_1}{t_2}h_1 + \cdots + \frac{t_{n-1}}{t_n}h_{n-1}\right) \cdot \psi\left(\Nr(t_n)h_n + \frac{b}{\Nr(t_1)}h_{0} \right) \\
&= \sum_{\begin{subarray}{c}t_1, \ldots, t_{n-1} \in \tilde{k}^{\times} \\ t_n \in [\tilde{k}^{\times}/\tilde{k}^{1}] \end{subarray}} \tilde{\psi} \left( \frac{t_1}{t_2}h_1 + \cdots + \frac{t_{n-1}}{1}h_{n-1}\right) \cdot \psi\left(\Nr(t_n)h_n + \frac{b}{\Nr(t_{1}t_{n})}h_{0} \right) \\
&= \sum_{\begin{subarray}{c}t_1, \ldots, t_{n-1} \in \tilde{k}^{\times} \\ s_n \in k^{\times} \end{subarray}} \tilde{\psi} \left( \frac{t_1}{t_2}h_1 + \cdots + \frac{t_{n-1}}{1}h_{n-1}\right) \cdot \psi\left(s_nh_n + \frac{b}{\Nr(t_1) s_n}h_{0} \right)\\
&=\Kl^{2;n-1}_{\Nr(h_1)\cdots \Nr(h_{n-1})h_nh_{0}b}(\psi).
\end{align*}
Here, we replaced $t_{i}$ with $t_{i}t_{n}$ in the third equality.
Finally, from Lemma \ref{lem:HDKl}, the right-hand side of the above equalities is equal to
\[
-\Kl^{0;n}_{\Nr(h_1)\cdots \Nr(h_{n-1})h_nh_{0}b}(\psi).
\]
\end{proof}

\begin{prop}[Odd case: $N=2n+1$]\label{prop:charU_{o}}
Let $h \in I_{\U_{2n+1}}^{+}\cap \U_{2n+1}^{\rs}(F)$ be an affine generic element with its simple affine components $(h_1, \ldots, h_n, h_{0})$.
Then we have 
\[
\Theta^{\U_{2n+1}}_{\omega',b}(h)= \Kl^{1;n}_{\Nr(h_1)\cdots \Nr(h_{n})h_{0}b}(\psi).
\]
\end{prop}

\begin{proof}
From Lemma \ref{lem:sumU}, the index set of the character formula (Theorem \ref{thm:CF}) is finite and represented by
\[
T'_{\U_{2n+1}}(q):=
\{\diag(t_{1},\ldots,t_{n},1,c(t_{n})^{-1},\ldots,c(t_{1})^{-1})\mid t_{1},\ldots,t_{n}\in\tilde{k}^{\times}\}.
\]
Thus we get
\begin{align*}
\Theta^{\U_{2n+1}}_{\omega',b}(h) 
&= \sum_{t \in T'_{\U_{2n+1}}(q)} \chi_{\omega',b}^{\U_{2n+1}}(tht^{-1}) \\
&= \sum_{t_1, \ldots, t_{n} \in \tilde{k}^{\times}} \tilde{\psi} \left( \frac{t_1}{t_2}h_1 + \cdots + \frac{t_{n}}{1}h_{n}\right) \cdot \psi\left(\frac{b}{\Nr(t_1)}h_{0} \right) \\
&=\Kl^{1;n}_{\Nr(h_{1})\cdots \Nr(h_{n})h_{0}b}(\psi).
\end{align*}
\end{proof}

\section{Twisted endoscopy for $\G_{N}$}\label{sec:endo}

\subsection{Simple endoscopic data}
Recall that we set 
\[
\G_{N}=\mathrm{Res}_{E/F}\GL_{N},
\]
and define the automorphism $\theta$ of $G$ over $F$ by
\[
\theta(g)=J_{N}{}^{t}\!c(g)^{-1}J_{N}^{-1}.
\]
Then we have
\[
\widehat{\G_{N}}=\GL_{N}(\C)\times\GL_{N}(\C),
\]
and the Weil group $W_F$ and the dual automorphism $\hat{\theta}$ to $\theta$ act on $\widehat{\G_{N}}$ by the following way:
\begin{itemize}
 \item For $w \in W_F$ and $(g, h) \in \GL_{N}(\C)\times\GL_{N}(\C)$,
 \[
 w(g, h) = 
 \begin{cases}
  (g, h) & \text{for }w \in W_E, \\
  (h, g) & \text{for }w \in W_F\setminus W_E.
 \end{cases}
 \]
 \item For $(g, h)\in\GL_{N}(\C)\times\GL_{N}(\C)$,
 \[
 \hat{\theta}(g, h)=(J_{N}{}^th^{-1}J_{N}^{-1}, J_{N}{}^{t}\!g^{-1}J_{N}^{-1}).
 \]
\end{itemize}

On the other hand, we have
\[
\widehat{\U_{N}}= \GL_{N}(\C),
\]
and an element $w\in W_F$ acts on $g\in\widehat{\U_{N}}$ by
\[
w(g) = 
\begin{cases}
 g & \text{for }w \in W_E, \\
 J_{N}{}^{t}\!g^{-1}J_{N}^{-1} & \text{for }w \in W_F\setminus W_E.
\end{cases}
\]

In this paper, we consider the endoscopy of the following two types:
\begin{description}
 \item[(1) The standard base change embedding]
 We define an $L$-embedding
 \[
 \xi_{+1} \colon {}^L\U_{N} \hookrightarrow {}^L\G_{N}
 \]
 as follows:
 \begin{align*}
  g\rtimes1 &\mapsto (g, J_{N}{}^{t}\!g^{-1}J_{N}^{-1})\rtimes1, \\
  1\rtimes w &\mapsto (I_{N}, I_{N})\rtimes w, \quad\text{for } w\in W_F.
 \end{align*}
 Then $(\U_{N}, {}^L\U_{N}, s=1, \xi_{+1})$ are endoscopic data for the triplet $(\G_{N}, \theta, 1)$.

 \item[(2) The twisted base change embedding]
 We fix $w_c \in W_F\setminus W_E$.
 We define a character $\chi$ on $E^{\times}$ by
 \[
 \chi(x):=(-1)^{\mathrm{val}(x)}.
 \]
 Then this character is unramified, and $\chi|_{F^{\times}}$ is quadratic.
 We regard $\chi$ also as a character of $W_E$ via the local class field theory.
 By using this character, we define an $L$-embedding
 \[
 \xi_{-1} \colon {}^L\U_{N} \hookrightarrow {}^L\G_{N}
 \]
 as follows:
 \begin{align*}
  g\rtimes1 &\mapsto (g, J_{N}{}^t\!g^{-1}J_{N}^{-1})\rtimes1, \\
  1\rtimes \sigma &\mapsto (\chi(\sigma)I_{N}, \chi(\sigma)^{-1}I_{N})\rtimes \sigma \text{, for } \sigma\in W_E,\\
  1\rtimes w_c &\mapsto (-I_{N}, I_{N})\rtimes w_c.
 \end{align*}
 Then $(\U_{N}, {}^L\U_{N}, s=1, \xi_{-1})$ are endoscopic data for the triplet $(\G_{N}, \theta, 1)$.
\end{description}
In fact, there are only these two kinds of simple (in the sense of Arthur) endoscopic data for $\G_{N}$ up to equivalence (see Section 4.7 in \cite{MR1081540}).

\subsection{Norm correspondences}
We recall the norm correspondence for twisted endoscopy in \cite{MR1687096}.
From the endoscopic data defined in the previous subsection, we get a map 
\[
\mathcal{A}_{\U_{N}/\G_{N}} \colon Cl_\mathrm{ss}(\U_{N}) \ra Cl_{\theta\mathchar`-\mathrm{ss}}(\G_{N},\theta) 
\]
from the set of semisimple conjugacy classes in $\U_{N}(\ol{F})$ to the set of $\theta$-semisimple $\theta$-conjugacy classes in $\G_{N}(\ol{F})$ (see Section 3.3 in \cite{MR1687096}).
Here note that this map is made from the restriction of the $L$-embedding $\xi_{\kappa}$ to $\widehat{\U_{N}}$, hence does not depend on $\kappa$.

Let $\G_{N}^{\strs}(F)$ be the set of strongly $\theta$-regular $\theta$-semisimple elements in $\G_{N}(F)$, and $\U_{N}^{\srs}(F)$ the set of strongly regular semisimple elements in $\U_{N}(F)$.
We say that $y\in \U_{N}^{\srs}(F)$ is a $\mathit{norm}$ of $x\in \G_{N}^{\strs}(F)$ if $x$ corresponds to $y$ via the map $\mathcal{A}_{\U_{N}/\G_{N}}$.
Let us recall that this map is described explicitly in terms of the diagonal maximal tori considered in Sections \ref{sec:ssc}.2 and \ref{sec:ssc}.3.
We fix the isomorphisms as follows:
\begin{align*}
 \G_{N}(F)\otimes_{F}\ol{F} &\cong \GL_{N}(\ol{F})\times\GL_{N}(\ol{F})\\
 g\otimes a&\mapsto \bigl(ga, c(g)a\bigr), \text{ and}\\
 \U_{N}(F)\otimes_{F}\ol{F} &\cong \GL_{N}(\ol{F})\\
 h\otimes b&\mapsto hb.
\end{align*}
Then the groups $\T_{\G_{N}}(\ol{F})$ and $\T_{\U_{N}}(\ol{F})$ are identified with subgroups of diagonal matrices in $\GL_{N}(\ol{F})\times\GL_{N}(\ol{F})$ and $\GL_{N}(\ol{F})$ under the above isomorphisms, respectively.
In the right-hand side of the first isomorphism, the involution $\theta$ is described as follows:
\[
\theta(g_{1},g_{2})=(J_{N}{}^{t}\!g_{2}^{-1}J_{N}^{-1}, J_{N}{}^{t}\!g_{1}^{-1}J_{N}^{-1}).
\]
In particular, the action of $\theta$ on $\T_{\G_{N}}$ is given by
\[
\theta
\bigl(\diag(t_1, \ldots, t_{N}), \diag(s_1, \ldots, s_{N})\bigr)
=
\bigl(\diag(s_{N}^{-1}, \ldots, s_{1}^{-1}), \diag(t_{N}^{-1}, \ldots, t_{1}^{-1})\bigr),
\]
and we have
\begin{align*}
\mathcal{A}_{\U_{N}/\G_{N}}\colon
Cl_\mathrm{ss}(\U_{N})
\cong
\T_{\U_{N}}(\ol{F})/\Omega_{\T_{\U_{N}}}
&\cong \T_{\G_{N},\theta}(\ol{F})/\Omega_{\T_{\G_{N}}}^{\theta}
\cong
Cl_{\theta\mathrm{\mathchar`-ss}}(\G_{N}, \theta)\\
\diag\left(\frac{t_1}{s_{N}}, \ldots, \frac{t_{N}}{s_1}\right) &\leftrightarrow \bigl(\diag(t_1, \ldots, t_{N}), \diag(s_1, \ldots, s_{N})\bigr),
\end{align*}
where $\Omega_{\T_{\U_{N}}}$ is the Weyl group of $\T_{\U_{N}}$ in $\U_{N}$, $\Omega_{\T_{\G_{N}}}^{\theta}$ is the $\theta$-fixed part of the Weyl group of $\T_{\G_{N}}$ in $\G_{N}$, and $\T_{\G_{N},\theta}$ is the $\theta$-coinvariant of $\T_{\G_{N}}$.
We remark that, as indicated in the above isomorphisms, $\Omega_{\T_{\G_{N}}}^{\theta}$ can be identified with $\Omega_{\T_{\U_{N}}}$ by noting that $\U_{N}$ is equal to $\G_{N}^{\theta}$ 
(see, for example, \cite[Section 1.1]{MR1687096} for a general explanation of this identification).
From this description, we get the following lemma:
\begin{lem}\label{lem:center}
Let $\gamma \in \U_{N}^{\srs}(F)$ and $\delta \in \G_{N}^{\strs}(F)$ such that $\gamma$ is a norm of $\delta$.
Then, for $z\in E^{\times}$, we have $z/c(z)\cdot\gamma\in \U_{N}^{\srs}(F)$ and $z\delta\in \G_{N}^{\strs}(F)$.
Moreover, $z/c(z)\cdot\gamma$ is a norm of $z\delta$. 
\end{lem}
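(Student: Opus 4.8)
The plan is to work through the explicit description of the map $\mathcal{A}_{H/G}$ recorded just above the statement, and to track what happens when we multiply $\delta$ by a scalar $z \in E^{\times}$. First I would recall that the center $Z$ of $G = \G(N)$ is $\mathrm{Res}_{E/F}(\mathbb{G}_m)$, so $z \in E^{\times} \subset Z(F)$ acts on $G$ by scalar multiplication and commutes with $\theta$; hence $\theta$-conjugacy and strong $\theta$-regularity are both compatible with this multiplication up to the twist coming from $\theta$. Concretely, since $\theta(z\delta) = \theta(z)\theta(\delta)$ and $\theta$ acts on the central copy of $E^{\times}$ by $z \mapsto c(z)^{-1}$, we get $N(z\delta) = z\delta\,\theta(z)\theta(\delta) = (z/c(z))\,N(\delta)$. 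Similarly, on the $\U(N)$ side, the center is $\U(1)$, and $z/c(z) \in \U(1)(F)$ since $\Nr_{E/F}(z/c(z)) = 1$; multiplying $\gamma$ by this central element is the natural operation matching the scalar multiplication on $G$.

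Next I would verify the two membership assertions. For $z/c(z)\cdot\gamma \in H^{\srs}(F)$: multiplication by a central element does not change the centralizer of an element in a reductive group, so $\Cent_H(z/c(z)\cdot\gamma) = \Cent_H(\gamma)$, which is a (strongly regular) maximal torus; strong regularity is exactly the condition that this centralizer be a torus, and semisimplicity is preserved since $z/c(z)\cdot\gamma$ is a product of commuting semisimple elements. The same argument, in the twisted setting, gives $z\delta \in G^{\strs}(F)$: strong $\theta$-regularity and $\theta$-semisimplicity are properties of $N(z\delta) = (z/c(z))N(\delta)$ together with the twisted centralizer, and multiplying by the central $z$ only alters things by the central element $z/c(z)$, which lies in every relevant torus. (One can cite Sections 3.2--3.3 of \cite{MR1687096} for the fact that strong $\theta$-regularity of $\delta$ is detected by $N(\delta)$ and a centralizer condition insensitive to central twists.)

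Finally, for the norm relation itself, I would use the explicit parametrization
\[
\diag\!\left(\frac{t_1}{s_N},\ldots,\frac{t_N}{s_1}\right) \longleftrightarrow \bigl(\diag(t_1,\ldots,t_N),\diag(s_1,\ldots,s_N)\bigr).
\]
If $\delta$ is $\theta$-conjugate to $\bigl(\diag(t_1,\ldots,t_N),\diag(s_1,\ldots,s_N)\bigr)$ under our fixed isomorphism $\G(N)\otimes_F\ol F \cong \GL_N(\ol F)\times\GL_N(\ol F)$, then $z\delta$ corresponds to $\bigl(\diag(zt_1,\ldots,zt_N),\diag(c(z)s_1,\ldots,c(z)s_N)\bigr)$ (recall $z\otimes 1 \mapsto (z,c(z))$ under this isomorphism). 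Its image under $\mathcal{A}_{H/G}$ is therefore
\[
\diag\!\left(\frac{zt_1}{c(z)s_N},\ldots,\frac{zt_N}{c(z)s_1}\right) = \frac{z}{c(z)}\cdot\diag\!\left(\frac{t_1}{s_N},\ldots,\frac{t_N}{s_1}\right),
\]
which is exactly $z/c(z)$ times a representative of the conjugacy class of $\gamma$. Since $\gamma$ is a norm of $\delta$ means precisely that $\gamma$ lies in this conjugacy class, we conclude that $z/c(z)\cdot\gamma$ is a norm of $z\delta$.

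The main obstacle, such as it is, is purely bookkeeping: one must make sure the central element $z$ is pushed through the fixed isomorphisms $\G(N)\otimes_F\ol F \cong \GL_N(\ol F)^2$ and $\U(N)\otimes_F\ol F \cong \GL_N(\ol F)$ consistently, so that the factor $z/c(z)$ (rather than, say, $z^2$ or $zc(z)$) emerges on the unitary side; after that, the compatibility of strong ($\theta$-)regularity with central multiplication is immediate and the norm relation follows by the displayed computation.
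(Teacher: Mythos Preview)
Your proposal is correct and follows essentially the same approach as the paper's proof, which is a two-sentence sketch invoking the definitions of strong ($\theta$-)regularity and the explicit description of $\mathcal{A}_{H/G}$. You have simply expanded both steps in detail: the preservation of strong ($\theta$-)regular ($\theta$-)semisimplicity under central multiplication via the invariance of (twisted) centralizers, and the norm computation via the explicit coordinates on $T_0(\ol F)$ and $T_{H,0}(\ol F)$.
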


\begin{proof}
Recall that $\gamma\in \U_{N}(F)$ is said to be strongly regular semisimple if its centralizer in $\U_{N}$ is a maximal torus of $\U_{N}$, and that $\delta\in \G_{N}(F)$ is said to be strongly $\theta$-regular $\theta$-semisimple if its $\theta$-centralizer in $\G_{N}$ is abelian.
Then the first assertion is trivial because a scalar-multiple does not affect ($\theta$-)centralizers.
The second assertion follows from the above explicit description of the map $\mathcal{A}_{\U_{N}/\G_{N}}$.
\end{proof}

In the same manner as Proposition 4.7 in \cite{Oi:2016}, we get the following proposition:
\begin{prop}\label{prop:affgennorm}
Let $h \in I_{\U_{N}}^{+} \subset \U_{N}(F)$ be an affine generic element.
Then $h$ is strongly regular semisimple elliptic, and there exists $g \in \G_{N}(F)$ satisfying the following conditions:
\begin{itemize}
 \item $g$ is strongly $\theta$-regular $\theta$-semisimple $\theta$-elliptic, and
 \item $h=\mathcal{N}(g)$, in particular $h$ is a norm of $g$.
\end{itemize}
\end{prop}

\subsection{Transfer factors}
In this subsection, we show that Kottwitz--Shelstad's transfer for the standard base change embedding $\xi_{+1}$ is trivial.
We fix the following $\theta$-stable Whittaker datum $(\mathbf{B}_{\G_{N}}, \lambda_{\G_{N}})$ of $\G_{N}$: 
\begin{itemize}
 \item $\mathbf{B}_{\G_{N}}$ is the subgroup of upper triangular matrices in $\G_{N}$, and 
 \item $\lambda_{\G_{N}}$ is the character of the unipotent radical $U_{\G_{N}}$ of $B_{\G_{N}}$ defined by 
 \[
 \lambda_{\G_{N}}(x):=\psi\circ\Tr_{E/F}(x_{12}+\cdots+x_{N-1, N}) \quad\text{for}\quad x=(x_{ij}) \in U_{\G_{N}}.
 \]
\end{itemize}
Then we have the normalized transfer factor $\Delta_{\U_{N},\G_{N}}$ for $\G_{N}$ and $\U_{N}$ with respect to $(\mathbf{B}_{\G_{N}}, \lambda_{\G_{N}})$ (see Section 5.3 in \cite{MR1687096}).
This is a function
\[
\Delta_{\U_{N},\G_{N}} \colon \U_{N}^{\srs}(F) \times \G_{N}^{\strs}(F) \ra \C,
\]
which has the following properties.
\begin{itemize}
 \item The value $\Delta_{\U_{N},\G_{N}}(h, g)$ is nonzero only if $h$ is a norm of $g$.
 \item If $h_1, h_2 \in \U_{N}^{\srs}(F)$ are stably conjugate, then $\Delta_{\U_{N},\G_{N}}(h_1, g)=\Delta_{\U_{N},\G_{N}}(h_2, g)$.
 \item If $g_1, g_2 \in \G_{N}^{\strs}(F)$ are $\theta$-conjugate, then $\Delta_{\U_{N},\G_{N}}(h, g_1)=\Delta_{\U_{N},\G_{N}}(h, g_2)$.
\end{itemize}

Recall that the transfer factor $\Delta_{\U_{N},\G_{N}}$ is defined as the product of $\Delta_\mathrm{I}$, $\Delta_\mathrm{II}$, $\Delta_\mathrm{III}$, $\Delta_\mathrm{IV}$, and the ratio of the root numbers of the root data for $\G_{N}$ and $\U_{N}$.

However, by using Waldspurger's formula for the transfer factors for classical groups (\cite[1.10 Proposition]{MR2672539}), 
we know that the product of the three factors $\Delta_{\mathrm{I}}$, $\Delta_{\mathrm{II}}$, and $\Delta_{\mathrm{III}}$ is trivial.
Indeed, in the notation of \cite{MR2672539}, only $H^{-}$-part of the endoscopic group $H=H^{+}\times H^{-}$ of $\G_{N}$ contributes to the product $\Delta_{\mathrm{I}}\cdot\Delta_{\mathrm{II}}\cdot\Delta_{\mathrm{III}}$ nontrivially.
However, in our setting, the $H^{-}$-part is trivial and we have $H=H^{+}=\U_{N}$ (see \cite[1.8 and 1.10]{MR2672539} for details).

Moreover, from the same argument as in Lemma 4.10 in \cite{Oi:2016}, we can check the triviality of $\Delta_{\mathrm{IV}}$ directly.

Finally, we can also check the triviality of the ratio of the root numbers.
Namely, we have
\[
\varepsilon_{\U_{N},\G_{N}}:=\frac{\varepsilon\bigl(\frac{1}{2},X^{\ast}(\mathbf{T}_{\G_{N}})^{\theta}\otimes_{\Z}\C,\psi\bigr)}{\varepsilon\bigl(\frac{1}{2},X^{\ast}(\mathbf{T}_{\U_{N}})\otimes_{\Z}\C,\psi\bigr)}=1,
\]
since we have $X^{\ast}(\T_{\G_{N}})^{\theta}\cong X^{\ast}(\T_{\G_{N},\theta})$ and $\T_{\G_{N},\theta}\cong\T_{\U_{N}}(\cong\mathrm{Res}_{E/F}\Gm^{N}$).

In summary, we get the following:
\begin{prop}\label{prop:Delta}
Let $h \in \U_{N}^{\srs}(F)$ and $g \in \G_{N}^{\strs}(F)$.
If $h$ is a norm of $g$, then the transfer factor $\Delta_{\U_{N},\G_{N}}(h, g)$ is equal to $1$.
\end{prop}

\subsection{Conjugate self-dual $L$-parameters}
We denote by $\Phi(\GL_{N,E})$ and $\Phi(\G_{N})$ the sets of equivalence classes of $L$-parameters of $\GL_{N}$ over $E$ and $\G_{N}$, respectively.
We fix $w_{c} \in W_{F}\setminus W_{E}$ and 
define a map $\phi\mapsto\phi'$ from $\Phi(\GL_{N,E})$ to $\Phi(\G_{N})$ as:
\begin{align*}
\phi' \colon W_{F}\times\SL_{2}(\C) &\ra \bigl(\GL_{N}(\C)\times\GL_{N}(\C) \bigr)\rtimes W_{F}={}^{L}\G_{N}\\
\sigma &\mapsto \left(\phi(\sigma), \phi(w_{c}^{-1}\sigma w_{c})\right)\rtimes\sigma \quad\text{for}\quad \sigma \in  W_{E},\\
w_{c} &\mapsto \left(\phi(w_{c}^{2}), I_{N}\right)\rtimes w_{c}, \\
g &\mapsto \bigl(\phi(g),\phi(g)\bigr)\rtimes1\quad\text{for}\quad g \in\SL_{2}(\C).
\end{align*}
Here we regard $\phi\in\Phi(\GL_{N,E})$ as an $N$-dimensional representation of $W_{E}\times\SL_{2}(\C)$.
Then this map is bijective and independent of the choice of $w_{c}$ (see Section 4.7 in \cite{MR1081540} or Section 2.2 in \cite{MR3338302}).
We often identify $\Phi(\GL_{N,E})$ with $\Phi(\G_{N})$ via this map.

\begin{defn}
\begin{enumerate}
\item
We say that an $L$-parameter $\phi \in \Phi(\G_{N})$ is conjugate self-dual if $\phi^{\vee}$ is equivalent to $\phi^{c}$ as $N$-dimensional representations of $W_{E}\times\SL_{2}(\C)$, where 
\begin{align*}
\phi^{\vee}(\sigma)&:={}^{t}\!\phi(\sigma)^{-1}, \text{ and} \\
\phi^{c}(\sigma)&:=\phi(w_{c}^{-1}\sigma w_{c}).
\end{align*}
\item
We say that a conjugate self-dual $L$-parameter $\phi \in \Phi(\G_{N})$ has parity $\eta\in\{\pm1\}$ if there exists $A \in \GL_{N}(\C)$ such that
\[
{}^{t}\!\phi^{c}(\sigma)\cdot A\cdot\phi(\sigma)=A \quad\text{and}\quad
{}^{t}\!A=\eta \cdot A \cdot \phi(w_{c}^{2}).
\]
\end{enumerate}
\end{defn}
Note that, if a conjugate self-dual $L$-parameter $\phi\in\Phi(\G_{N})$ is irreducible, then it has its parity as a consequence of Schur's lemma.
%

Next we consider the group $\U_{N}$.
Let $\Phi(\U_{N})$ be the set of equivalence classes of $L$-parameters of $\U_{N}$ and $\xi=\xi_\kappa$ the $L$-embedding defined in Section \ref{sec:endo}.1 for $\kappa=\pm1$.
Then we get a map 
\[
\xi_{\kappa, \ast}\colon\Phi(\U_{N})\ra\Phi(\G_{N});\quad \phi\mapsto\xi_{\kappa}\circ\phi.
\]
The image of this map is described as follows:

\begin{lem}\label{lem:csdLpar}
\begin{enumerate}
 \item The map $\xi_{\kappa,\ast}$ is injective and its image is the set of conjugate self-dual $L$-parameters of $\G_{N}$ with parity $(-1)^{N-1}\kappa$.
 \item Let $\phi\in\Phi(\U_{N})$ and $\phi_{\kappa}:=\xi_{\kappa, \ast}(\phi)$.
 We denote by $\pi_{\phi_{\kappa}}$ the irreducible smooth representation of $\GL_{N}(E)$ which corresponds to $\phi_{\kappa}$ via the local Langlands correspondence for $\GL_{N}(E)$.
 Then we have $\pi_{\phi_{-1}}=\pi_{\phi_{+1}}\otimes(\chi\circ\det)$.
\end{enumerate}
\end{lem}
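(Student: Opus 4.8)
The plan is to convert both assertions into statements about $L$-parameters of $\GL_{N}$ over $E$, via the identification $\Phi(G)\cong\Phi(\GL_{N}(E))$ recalled in Section \ref{sec:endo}.4, and then to deduce (2) from the compatibility of the local Langlands correspondence for $\GL_{N}$ with character twists.

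First I would unwind $\xi_{\kappa}\circ\phi_{H}$. For $\phi_{H}\in\Phi(H)$ write $\psi:=\phi_{H}|_{L_{E}}\colon L_{E}\ra\GL_{N}(\C)$ for the underlying $\GL_{N}(\C)$-valued homomorphism; this is an $L$-parameter of $\GL_{N}$ over $E$. Substituting the formulas for $\xi_{\kappa}$ from Section \ref{sec:endo}.1 into the normalization of parameters of $G$ from Section \ref{sec:endo}.4, I would verify directly that the $\GL_{N}(E)$-parameter attached to $\xi_{+1}\circ\phi_{H}$ is $\psi$ itself, while the one attached to $\xi_{-1}\circ\phi_{H}$ is $\chi\otimes\psi$, with $\chi$ the unramified quadratic character of $W_{E}$ fixed in Section \ref{sec:endo}.1. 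Conversely, extending a given $\psi$ to a homomorphism $\phi_{H}\colon L_{F}\ra{}^{L}\!H$ amounts to a choice $\phi_{H}(w_{c})=A\rtimes w_{c}$ with $A\in\GL_{N}(\C)$; writing out the relations forced by $\phi_{H}(w_{c})\phi_{H}(\sigma)\phi_{H}(w_{c})^{-1}=\phi_{H}(w_{c}\sigma w_{c}^{-1})$ and $\phi_{H}(w_{c})^{2}=\phi_{H}(w_{c}^{2})$, together with the action $g\mapsto J\,{}^{t}g^{-1}J^{-1}$ of $W_{F}\setminus W_{E}$ on $\widehat{H}$, these say exactly that $A$ exhibits $\psi$ as conjugate self-dual. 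Since ${}^{t}J=(-1)^{N-1}J$, the sign $\eta$ in ${}^{t}A=\eta\cdot\psi(w_{c}^{2})\cdot A$ works out to $(-1)^{N-1}$ in the case of $\xi_{+1}$; and because $\chi|_{F^{\times}}$ is the quadratic character attached to $E/F$ (so that $\chi$ is conjugate symplectic), tensoring by $\chi$ reverses the type of the conjugate self-dual structure. This computes the parity as $(-1)^{N-1}\kappa$ and identifies the image of $\xi_{\kappa,\ast}$ claimed in (1).

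For injectivity in (1), suppose $\xi_{\kappa}\circ\phi_{H}$ and $\xi_{\kappa}\circ\phi_{H}'$ are $\widehat{G}$-conjugate. Since $s=1$ we have $\xi_{\kappa}(\widehat{H})=\widehat{G}^{1}=(\widehat{G}^{\hat{\theta}})^{0}$; I would use the explicit structure of $\widehat{G}=\GL_{N}(\C)\times\GL_{N}(\C)$ and of $\hat{\theta}$ to show that an element of $\widehat{G}$ conjugating one such parameter to the other can be modified so as to normalize $\xi_{\kappa}(\widehat{H})$, and then descend the conjugacy through the injection $\xi_{\kappa}|_{\widehat{H}}$ to get $\phi_{H}\sim\phi_{H}'$ in $\Phi(H)$. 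This is the one place that really uses the internal structure of $\widehat{G}$, and is where \cite[Lemma 2.2.1]{MR3338302} does the work.

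Part (2) is then immediate: by the first step the $\GL_{N}(E)$-parameters underlying $\phi_{+1}=\xi_{+1}\circ\phi_{H}$ and $\phi_{-1}=\xi_{-1}\circ\phi_{H}$ differ exactly by a twist by $\chi$, and, regarding $\chi$ as a character of $E^{\times}$ via local class field theory, the local Langlands correspondence for $\GL_{N}$ over $E$ turns this twist into $\pi_{\phi_{-1}}=\pi_{\phi_{+1}}\otimes(\chi\circ\det)$. I expect the main obstacle to be the sign bookkeeping in the second paragraph --- getting $(-1)^{N-1}\kappa$ exactly right from the symmetry of $J$ and the effect of the $\chi$-twist on conjugate-dual structures --- together with the care needed in the injectivity step to distinguish $\widehat{G}$-conjugacy from $\xi_{\kappa}(\widehat{H})$-conjugacy of parameters; the remaining steps are formal once $\Phi(G)$ is identified with $\Phi(\GL_{N}(E))$ and LLC for $\GL_{N}$ is in hand.
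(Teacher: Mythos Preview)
Your proposal is correct and, for part (2), matches the paper's proof exactly: the paper simply says that (2) ``follows easily from the definition of the above bijection $\Phi(G)\cong\Phi(\GL_{N}(E))$ and a property of the local Langlands correspondence for $\GL_{N}$,'' which is precisely your argument via the $\chi$-twist on the underlying $\GL_{N}(E)$-parameter. For part (1), the paper does not give any argument at all beyond the citation to \cite[Lemma 2.2.1]{MR3338302}, so your sketch of the parity computation (via the symmetry ${}^{t}J=(-1)^{N-1}J$ and the sign-reversal under the $\chi$-twist) and of injectivity goes well beyond what the paper does; your outline is in the right direction and consistent with how Mok's lemma is proved, though as you note the injectivity step is where the cited reference really carries the weight.
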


\begin{proof}
The first assertion is Lemma 2.2.1 in \cite{MR3338302}.
The second assertion follows easily from the definition of the above bijection $\Phi(\G_{N})\cong\Phi(\GL_{N,E})$ and a property of the local Langlands correspondence for $\GL_{N}$.
\end{proof}

\begin{lem}\label{lem:csdssc}
Let $(\omega,a,\zeta)\in\SSC^{\theta}(\G_{N})$.
Then we have 
\[
\pi^{\G_{N}}_{\omega,a,\zeta}\cong\pi^{\G_{N}}_{\omega,a,-\zeta}\otimes (\chi\circ\det),
\]
where $\chi=(-1)^{\mathrm{val}}$ is the character of $E^{\times}$.
\end{lem}

\begin{proof}
By the Frobenius reciprocity, we have
\begin{align*}
&\Hom_{\G_{N}(F)}\bigl(\pi^{\G_{N}}_{\omega,a,\zeta}, \pi^{\G_{N}}_{\omega,a,-\zeta}\otimes (\chi\circ\det)\bigr)\\
&\cong \Hom_{Z_{\G_{N}}I_{\G_{N}}^{+}\lan\varphi_{a^{-1}}\ran}\bigl(\chi^{\G_{N}}_{\omega,a,\zeta}, \pi^{\G_{N}}_{\omega,a,-\zeta}\otimes (\chi\circ\det)\bigr)\\
&\cong \Hom_{Z_{\G_{N}}I_{\G_{N}}^{+}\lan\varphi_{a^{-1}}\ran}\bigl(\chi^{\G_{N}}_{\omega,a,\zeta}\otimes (\chi^{-1}\circ\det), \pi^{\G_{N}}_{\omega,a,-\zeta}\bigr)\\
&= \Hom_{Z_{\G_{N}}I_{\G_{N}}^{+}\lan\varphi_{a^{-1}}\ran}\bigl(\chi^{\G_{N}}_{\omega,a,-\zeta}, \pi^{\G_{N}}_{\omega,a,-\zeta}\bigr)\neq0.
\end{align*}
Since $\pi^{\G_{N}}_{\omega,a,\zeta}$ and $\pi^{\G_{N}}_{\omega,a,-\zeta}\otimes (\chi\circ\det)$ are irreducible, they are equivalent.
\end{proof}

\subsection{Endoscopic character relation}
We first recall the endoscopic classification of representations of unitary groups in \cite{MR3338302}.

For $\phi \in \Phi(\U_{N})$, we set
\begin{align*}
 S_{\phi} &:= \Cent_{\widehat{\U_{N}}}\bigl(\mathrm{Im}(\phi)\bigr) \text{, and }\\
 \mathcal{S}_{\phi} &:= S_{\phi} / S_{\phi}^0 Z(\widehat{\U_{N}})^{W_{F}}.
\end{align*}

\begin{thm}[{\cite[Theorems 2.5.1 and 3.2.1]{MR3338302}}]\label{thm:Mok}
For every tempered $L$-parameter $\phi \in \Phi(\U_{N})$, there is a finite set $\Pi^{\U_{N}}_{\phi}$ consisting of irreducible tempered representations of $\U_{N}(F)$, and the following properties hold.
\begin{itemize}
\item 
There is a bijection from $\Pi^{\U_{N}}_{\phi}$ to the group $\widehat{\mathcal{S}_{\phi}}$ of characters of $\mathcal{S}_{\phi}$ $($depending on a choice of Whittaker data for $\U_{N}$$)$.
\item
The sum of the characters of the representations belonging to $\Pi_{\phi}^{\U_{N}}$ is stable.
\item
Let $\phi_{\kappa}:=\xi_{\kappa,\ast}(\phi)$ and $\pi^{\G_{N}}_{\kappa}$ be the corresponding conjugate self-dual supercuspidal representation of $\G_{N}(F)$.
Then, for every $g \in \G_{N}^{\strs}(F)$, we have the following equality: 
\[
\Theta^{\G_{N}}_{\phi,\kappa,\theta}(g) = \sum_{h\mapsto g/\sim} \frac{D_{\U_{N}}(h)^2}{D_{\G_{N},\theta}(g)^2} \Delta_{\U_{N},\G_{N}}(h,g) \sum_{\pi\in\Pi_{\phi}^{\U_{N}}}\Theta_{\pi}(h).
\]
Here,
\begin{itemize}
\item the sum is over stable conjugacy classes of norms $h \in \U_{N}^{\srs}(F)$ of $g$,
\item $\Theta^{\G_{N}}_{\phi,\kappa,\theta}$ is the $\theta$-twisted character of $\pi^{\G_{N}}_{\kappa}$ normalized via a fixed $\theta$-stable Whittaker data $($see Remark $\ref{rem:normalization}$$)$, 
\item $D_{\U_{N}}(h)$ $($resp.\ $D_{\G_{N},\theta}(g)$$)$ is the Weyl discriminant for $h$ $($resp.\ the $\theta$-twisted Weyl discriminant for $g$$)$, and 
\item $\Delta_{\U_{N},\G_{N}}$ is the Kottwitz--Shelstad transfer factor $($with respect to $\xi_{\kappa}$$)$ normalized via a fixed $\theta$-stable Whittaker data.
\end{itemize}
\end{itemize}
\end{thm}

Here we note, in {\cite[Theorem 3.2.1]{MR3338302}, the above relation of characters are stated in terms of the distribution characters.
However, we can rewrite it as an equality of characters (i.e., functions on the sets of strongly regular semisimple elements of $\U_{N(F)}$ and $\G_{N}(F)$) by using Weyl's integration formula (e.g., see Section 5 in \cite{MR3067291}).

Let $\pi$ be a conjugate self-dual irreducible supercuspidal representation of $\G_{N}(F)$ such that the parity of the corresponding $L$-parameter $\phi_{\pi}$ is given by $(-1)^{N-1}\kappa$.
From Lemma \ref{lem:csdLpar} (1), $\phi'_{\pi}$ factors through the $L$-embedding $\xi_{\kappa}$.
We write $\phi$ for the $L$-parameter of $\U_{N}$ such that $\phi'_{\pi}=\xi_{\kappa}\circ\phi$.
Then, by Theorem \ref{thm:Mok}, we get a finite set $\Pi^{\U_{N}}_{\phi}$ consisting of irreducible representations of $\U_{N}(F)$.

Since $\pi$ is supercuspidal, its $L$-parameter $\phi_{\pi}$ is irreducible as a representation of $W_{E}\times\SL_{2}(\C)$.
Therefore $\Cent_{\widehat{\G_{N}}}(\mathrm{Im}(\phi'_{\pi})) \subset \GL_{N}(\C)\times\GL_{N}(\C)$ consists of pairs of scalar matrices.
Thus $\Cent_{\widehat{\U_{N}}}(\mathrm{Im}(\phi))$ consists of scalar matrices.
Hence the group $\mathcal{S}_{\phi}$ is trivial and $\Pi^{\U_{N}}_{\phi}$ is a singleton by Theorem \ref{thm:Mok}.

Moreover, since the $L$-packet $\Pi^{\U_{N}}_{\phi}$ contains a supercuspidal representation by \cite[8.4.4 $\mathrm{Th\acute{e}or\grave{e}me}$]{MR2366373}, the unique representation in $\Pi^{\U_{N}}_{\phi}$ is supercuspidal.
We denote it by $\pi_{\U_{N}}$ and say that $\pi_{\U_{N}}$ is $\mathit{associated}$ $\mathit{to}$ $\pi$.
We remark that the character $\Theta_{\pi_{\U_{N}}}$ of $\pi_{\U_{N}}$ is stable by Theorem \ref{thm:Mok}.
\[
\xymatrix{
\pi & \ar@{<~>}[r]^-{\text{LLC for $\GL_{N}$}} && W_F\times\SL_{2}(\C) \ar[rd]_-{\phi} \ar[r]^-{\phi'_{\pi}} & {}^{L}\G_{N}\\
{\Pi^{\U_{N}}_{\phi}=\{\pi_{\U_{N}}\}} & \ar@{<~>}[r]^-{\text{Theorem \ref{thm:Mok}}} &&& {}^{L}\U_{N} \ar@{^{(}->}_-{\xi_{\kappa}}[u] \\
}
\]

We can simplify this equality by noting the bijectivity of $\mathcal{A}_{\U_{N}/\G_{N}}$ and the triviality of the transfer factor $\Delta_{\U_{N},\G_{N}}$:
\begin{cor}\label{cor:ECR}
Let $\pi$ be a conjugate self-dual irreducible supercuspidal representation of $\G_{N}(F)$.
We suppose that the parity of the $L$-parameter $\phi_{\pi}$ of $\pi$ is $(-1)^{N-1}$ $($i.e., $\phi'_{\pi}$ factors through the standard base change $L$-embedding $\xi_{+1}$$)$.
Let $\pi_{\U_{N}}$ be the irreducible supercuspidal representation of $\U_{N}(F)$ associated to $\pi$ in the above manner.
Let $g \in \G_{N}^{\strs}(F)$ and $h \in \U_{N}^{\srs}(F)$ such that $h$ is a norm of $g$.
Then we have 
\[
\Theta_{\pi, \theta}(g)=\Theta_{\pi_{\U_{N}}}(h).
\]
\end{cor}

\begin{proof}
Since the map $\mathcal{A}_{\U_{N}/\G_{N}}$ is bijective, every element of $\G_{N}^{\strs}(F)$ has at most one norm in $\U_{N}^{\srs}(F)$ up to stable conjugacy.
On the other hand, for $(h,g)$, the transfer factor is trivial by Proposition \ref{prop:Delta}.
Moreover, also $D_{\U_{N}}(h)/D_{\G_{N},\theta}(g)$ is trivial since this equals $\Delta_{\mathrm{IV}}(h,g)^{-1}$.
Therefore, by combining these observations with Theorem \ref{thm:Mok}, we get the assertion.
\end{proof}

\section{Endoscopic lifting of simple supercuspidal representations}\label{sec:main}
\subsection{Norms of $1+\varphi_{\tilde{u}}$ and $\varphi_{\tilde{u}}(1+\varphi_{\tilde{u}})$}

We first introduce two technical lemmas about the $\theta$-semisimplicity and $\theta$-regularity of elements of $\G_{N}(\ol{F})$.
These lemmas can be shown by the same arguments as in \cite[Lemma 4.3]{Oi:2016} and \cite[Lemma 4.11]{Oi:2016}:

\begin{lem}\label{lem:norm0}
Let $x$ be a $\theta$-semisimple element in $\G_{N}(\ol{F})$ and $y$ a semisimple element in $\U_{N}(\ol{F})$ which corresponds to $x$ via $\mathcal{A}_{\U_{N}/\G_{N}}$.
If $\mathcal{N}(x)=x\theta(x) \in \G_{N}(\ol{F})$ is regular, then $x$ is strongly $\theta$-regular and $y$ is strongly regular.
\end{lem}

\begin{lem}\label{lem:tss}
Let $g \in \G_{N}(\ol{F})$ be a semisimple element such that $\mathcal{N}(g)=g\theta(g)$ is regular semisimple.
If $g\theta(g)=\theta(g)g$, then $g$ is $\theta$-semisimple.
\end{lem}

Now let $\tilde{u}$ be an element of
\[
\begin{cases}
k^{\times} & \text{if } N=2n\\
(\tilde{k}^{0})^{\times} & \text{if } N=2n+1,
\end{cases}
\]
and we consider the following matrices:
\[
1+\varphi_{\tilde{u}} = 
\begin{pmatrix}
 1&1&&\\
 &1&\ddots&\\
 &&\ddots&1\\
 \varpi \tilde{u}&&&1
\end{pmatrix} 
\in I_{\G_{N}}^{+} \subset \G_{N}(F) \text{, and }
\]
\[
\mathcal{N}(1+\varphi_{\tilde{u}})= 
(1-\varpi \tilde{u})^{-1}
\begin{pmatrix}
 1+\varpi \tilde{u}&2&\hdots&2\\
 2\varpi \tilde{u}&\ddots&\ddots&\vdots\\
 \vdots&\ddots&\ddots&2\\
 2\varpi \tilde{u}&\hdots&2\varpi \tilde{u}&1+\varpi \tilde{u}
\end{pmatrix}\\
\in I_{\U_{N}}^{+}\subset \U_{N}(F).
\]
Note that these are affine generic elements.

\begin{prop}\label{prop:N}
The element $1+\varphi_{\tilde{u}} \in \G_{N}(F)$ is strongly $\theta$-regular $\theta$-semisimple and $\mathcal{N}(1+\varphi_{\tilde{u}}) \in \U_{N}(F)$ is strongly regular semisimple.
Moreover, $\mathcal{N}(1+\varphi_{\tilde{u}})$ is a norm of $1+\varphi_{\tilde{u}}$.
\end{prop}

\begin{proof}
We first show that $1+\varphi_{\tilde{u}}$ is $\theta$-semisimple.
As the characteristic polynomial of an affine generic element is Eisenstein over $E$ (see, e.g., Lemma 7.5 in \cite{Oi:2018}), $1+\varphi_{\tilde{u}}$ has $N$ distinct eigenvalues.
In particular, $1+\varphi_{\tilde{u}}$ is semisimple.
Since 
\begin{align*}
\theta(1+\varphi_{\tilde{u}})
&=J_{N}(1+{}^tc(\varphi_{\tilde{u}}))^{-1}J_{N}^{-1}\\
&=J_{N}(1-{}^t\varphi_{(-1)^{N}\tilde{u}}+{}^t\varphi_{(-1)^{N}\tilde{u}}^2-\cdots)J_{N}^{-1}\\
&=1+\varphi_{\tilde{u}}+\varphi_{\tilde{u}}^2+\cdots,
\end{align*}
$1+\varphi_{\tilde{u}}$ commutes with $\theta(1+\varphi_{\tilde{u}})$.
Moreover, since the residual characteristic is not equal to $2$, $\mathcal{N}(1+\varphi_{\tilde{u}})$ is an affine generic element of $\G_{N}(F)$.
Hence $\mathcal{N}(1+\varphi_{\tilde{u}})$ is regular semisimple from the same reason as $1+\varphi_{\tilde{u}}$.
Therefore $1+\varphi_{\tilde{u}}$ is $\theta$-semisimple by Lemma \ref{lem:tss}.

By the definition of $\mathcal{A}_{\U_{N}/\G_{N}}$, $\mathcal{N}(1+\varphi_{\tilde{u}})$ corresponds to $1+\varphi_{\tilde{u}}$ via $\mathcal{A}_{\U_{N}/\G_{N}}$.
Then $1+\varphi_{\tilde{u}}$ is strongly $\theta$-regular and $\mathcal{N}(1+\varphi_{\tilde{u}})$ is strongly regular by Lemma \ref{lem:norm0}.
\end{proof}

Next we consider the following elements:
\[
\varphi_{\tilde{u}}(1+\varphi_{\tilde{u}}) \in \G_{N}(F) \text{, and }
\]
\[
\mathcal{N}\bigl(\varphi_{\tilde{u}}(1+\varphi_{\tilde{u}})\bigr)=\bigl(\varphi_{\tilde{u}}(1+\varphi_{\tilde{u}})\bigr)\cdot\theta\bigl(\varphi_{\tilde{u}}(1+\varphi_{\tilde{u}})\bigr)=-\mathcal{N}(1+\varphi_{\tilde{u}}) \in \U_{N}(F).
\]

\begin{prop}\label{prop:N'}
The element $\varphi_{\tilde{u}}(1+\varphi_{\tilde{u}}) \in \G_{N}(F)$ is strongly $\theta$-regular $\theta$-semisimple and $-\mathcal{N}(1+\varphi_{\tilde{u}}) \in \U_{N}(F)$ is strongly regular semisimple.
Moreover, $-\mathcal{N}(1+\varphi_{\tilde{u}})$ is a norm of $\varphi_{\tilde{u}}(1+\varphi_{\tilde{u}})$.
\end{prop}

\begin{proof}
By the same argument in the proof of Proposition \ref{prop:N}, we only have to show that $\varphi_{\tilde{u}}(1+\varphi_{\tilde{u}})$ is $\theta$-semisimple.
Since $1+\varphi_{\tilde{u}}$ is semisimple by the proof of the previous proposition,  it can be diagonalized in $\G_{N}(\ol{F})$.
Hence so is $\varphi_{\tilde{u}}$.
As $\varphi_{\tilde{u}}$ commutes with $1+\varphi_{\tilde{u}}$, also $\varphi_{\tilde{u}}(1+\varphi_{\tilde{u}})$ can be diagonalized.
Thus $\varphi_{\tilde{u}}(1+\varphi_{\tilde{u}})$ is a semisimple element.
Since 
\begin{align*}
\theta\bigl(\varphi_{\tilde{u}}(1+\varphi_{\tilde{u}})\bigr)
&=\theta(\varphi_{\tilde{u}})\theta(1+\varphi_{\tilde{u}})\\
&=-\varphi_{\tilde{u}}^{-1}(1+\varphi_{\tilde{u}}+\varphi_{\tilde{u}}^2+\cdots),
\end{align*}
$\varphi_{\tilde{u}}(1+\varphi_{\tilde{u}})$ commutes with $\theta(\varphi_{\tilde{u}}(1+\varphi_{\tilde{u}}))$.
Finally, as
\[
\mathcal{N}\bigl(\varphi_{\tilde{u}}(1+\varphi_{\tilde{u}})\bigr)= -\mathcal{N}(1+\varphi_{\tilde{u}})
\]
is regular semisimple by the proof of Proposition \ref{prop:N}, $\varphi_{\tilde{u}}(1+\varphi_{\tilde{u}})$ is $\theta$-semisimple by Lemma \ref{lem:tss}.
\end{proof}

%

\subsection{Parity of simple supercuspidal representations}
Let $(\omega,a,\zeta)\in \SSC^{\theta}(\G_{N})$.
From Lemmas \ref{lem:csdLpar} and \ref{lem:csdssc}, we know that 
the parity of the $L$-parameter $\pi^{\G_{N}}_{\omega,a,1}$ is different from that of $\pi^{\G_{N}}_{\omega,a,-1}$.
In this subsection, we determine the parity of these parameters.
We first consider the case where 
\[
a=
\begin{cases}
1 &\text{if } N=2n,\\
\epsilon &\text{if } N=2n+1.
\end{cases}
\]

Let $\zeta_{\omega} \in \{\pm1\}$ such that the parity of the $L$-parameter of $\pi^{\G_{N}}_{\omega,a,\zeta_{\omega}}$ is $(-1)^{N-1}$.
Then the $L$-parameter of $\pi^{\G_{N}}_{\omega,a,\zeta_{\omega}}$ factors through the standard base change embedding $\xi_{+1}$ by Lemma \ref{lem:csdLpar}.
Let $\pi_{\U_{N}}$ be the supercuspidal representation of $\U_{N}(F)$ which is associated to $\pi^{\G_{N}}_{\omega,a,\zeta_{\omega}}$.

Let $\tilde{u}$ be an element of
\[
\begin{cases}
k^{\times} & \text{if } N=2n,\\
(\tilde{k}^{0})^{\times} & \text{if } N=2n+1.
\end{cases}
\]

\begin{prop}\label{prop:CR}
For $z\in \mcO_{E}^{\times}$, we have 
\begin{align*}
\Theta_{\pi_{\U_{N}}}\bigl(z/c(z)\cdot \mathcal{N}(1+\varphi_{\tilde{u}})\bigr)
&=\Theta^{\G_{N}}_{\omega,a,\zeta_{\omega},\theta}\bigl(z(1+\varphi_{\tilde{u}})\bigr)\\
&=
\begin{cases}
-\omega(\ol{z})\cdot\Kl_{2^{2n}\tilde{u}}^{0;n}(\psi) & \text{if } N=2n,\\
\omega(\ol{z})\cdot\Kl_{2^{2n+1}\epsilon\tilde{u}}^{1;n}(\psi) & \text{if } N=2n+1.
\end{cases}
\end{align*}
\end{prop}

\begin{proof}
By Proposition \ref{prop:N} and Lemma \ref{lem:center}, $z/c(z)\cdot \mathcal{N}(1+\varphi_{\tilde{u}}) \in \U_{N}^{\srs}(F)$ is a norm of $z(1+\varphi_{\tilde{u}}) \in \G_{N}^{\strs}(F)$.
Therefore, by Corollary \ref{cor:ECR}, we have 
\[
\Theta^{\G_{N}}_{\omega,a,\zeta_{\omega},\theta}\bigl(z(1+\varphi_{\tilde{u}})\bigr)
=\Theta_{\pi_{\U_{N}}}\bigl(z/c(z)\cdot \mathcal{N}(1+\varphi_{\tilde{u}})\bigr).
\]
On the other hand, since we have $\omega^{\G_{N}}_{\omega,a,\zeta_{\omega}}(z)=\omega(\ol{z})$, where $\omega^{\G_{N}}_{\omega,a,\zeta_{\omega}}$ is the central character of $\pi^{\G_{N}}_{\omega,a,\zeta_{\omega}}$, we get
\[
\Theta^{\G_{N}}_{\omega,a,\zeta_{\omega},\theta}\bigl(z(1+\varphi_{\tilde{u}})\bigr)
=\omega(\ol{z})\cdot\Theta^{\G_{N}}_{\omega,a,\zeta_{\omega},\theta}(1+\varphi_{\tilde{u}}).
\]

Here we note that, since the Whittaker normalization coincides with the normalization used in Section \ref{sec:char} (see Remark \ref{rem:normalization}), we can apply the computations in Section \ref{sec:char} to the right-hand sides of the above equalities.
Since $\mathcal{N}(1+\varphi_{\tilde{u}})$ is an affine generic element of $\G_{N}(F)$, $1+\varphi_{\tilde{u}}$ satisfies the assumptions of Propositions \ref{prop:charTG} and \ref{prop:charTG_{o}}.
As the simple affine components of $1+\varphi_{\tilde{u}}$ are given by $(1,\ldots,1,\tilde{u})$, we have
\[
\Theta^{\G_{N}}_{\omega,a,\zeta_{\omega},\theta}(1+\varphi_{\tilde{u}})
=
\begin{cases}
-\Kl_{2^{2n}\tilde{u}}^{0;n}(\psi) & \text{if } N=2n,\\
 \Kl_{2^{2n+1}\epsilon\tilde{u}}^{1;n}(\psi) & \text{if } N=2n+1.
\end{cases}
\]
\end{proof}

\begin{prop}\label{prop:CR'}
We have
\begin{align*}
\Theta_{\pi_{\U_{N}}}\bigl(-\mathcal{N}(1+\varphi_{\tilde{u}})\bigr)
&=\Theta^{\G_{N}}_{\omega,a,\zeta_{\omega},\theta}\bigl(\varphi_{\tilde{u}}(1+\varphi_{\tilde{u}})\bigr)\\
&=\zeta_{\omega}\cdot
\begin{cases}
\Kl_{2^{2n}\tilde{u}}^{0;n}(\psi) & \text{if } N=2n,\\
 \Kl_{2^{2n+1}\epsilon\tilde{u}}^{1;n}(\psi) & \text{if } N=2n+1.
\end{cases}
\end{align*}
\end{prop}

\begin{proof}
By Proposition \ref{prop:N'}, $-\mathcal{N}(1+\varphi_{\tilde{u}}) \in \U_{N}^{\srs}(F)$ is a norm of $\varphi_{\tilde{u}}(1+\varphi_{\tilde{u}}) \in \G_{N}^{\strs}(F)$.
Therefore, by Corollary \ref{cor:ECR}, we have 
\[
\Theta^{\G_{N}}_{\omega,a,\zeta_{\omega},\theta}\bigl(\varphi_{\tilde{u}}(1+\varphi_{\tilde{u}})\bigr)
=\Theta_{\pi_{\U_{N}}}\bigl(-\mathcal{N}(1+\varphi_{\tilde{u}})\bigr).
\]
Since $1+\varphi_{\tilde{u}}$ satisfies the assumptions of Propositions \ref{prop:charTG'} and \ref{prop:charTG'_{o}} and the simple affine components of $1+\varphi_{\tilde{u}}$ are given by $(1,\ldots,1,\tilde{u})$, we have
\[
\Theta^{\G_{N}}_{\omega,a,\zeta_{\omega},\theta}\bigl(\varphi_{\tilde{u}}(1+\varphi_{\tilde{u}})\bigr)
=\zeta_{\omega}\cdot
\begin{cases}
\Kl_{2^{2n}\tilde{u}}^{0;n}(\psi) & \text{if } N=2n,\\
 \Kl_{2^{2n+1}\epsilon\tilde{u}}^{1;n}(\psi) & \text{if } N=2n+1.
\end{cases}
\]
\end{proof}

\begin{cor}\label{cor:parity}
We have $\zeta_{\omega}=(-1)^{N-1}\omega(\epsilon)$.
\end{cor}

\begin{proof}
By applying Propositions \ref{prop:CR} and \ref{prop:CR'} to $z=\epsilon$, for any $\tilde{u}$, we have 
\[
\begin{cases}
-\omega(\epsilon)\cdot\Kl_{2^{2n}\tilde{u}}^{0;n}(\psi)=\zeta_{\omega}\cdot\Kl_{2^{2n}\tilde{u}}^{0;n}(\psi) & \text{if } N=2n,\\
\omega(\epsilon)\cdot\Kl_{2^{2n+1}\epsilon\tilde{u}}^{1;n}(\psi)=\zeta_{\omega}\cdot\Kl_{2^{2n+1}\epsilon\tilde{u}}^{1;n}(\psi) & \text{if } N=2n+1.
\end{cases}
\]
Since there exists a $\tilde{u}$ such that the Kloosterman sum does not vanish (see, e.g., \cite[Proposition A.2]{Oi:2018}), we get $\zeta_{\omega}=(-1)^{N-1}\omega(\epsilon)$.
\end{proof}

Finally, by replacing the fixed uniformizer $\varpi$ with its scalar multiple by $k^{\times}$, we can conclude that the parity of the $L$-parameter of $\pi^{\G_{N}}_{\omega,a,\zeta_{\omega}}$ is $(-1)^{N-1}$ for any $a$.

\subsection{Existence of $I_{\U_{N}}^{++}$-fixed vector}
We again put
\[
a=
\begin{cases}
1 &\text{if } N=2n,\\
\epsilon &\text{if } N=2n+1,
\end{cases}
\]
and consider the simple supercuspidal representation $\pi^{\G_{N}}_{\omega,a,\zeta_{\omega}}$.
Let $\pi_{\U_{N}}$ be the irreducible supercuspidal representation of $\U_{N}(F)$ associated to $\pi^{\G_{N}}_{\omega,a,\zeta_{\omega}}$.

\begin{lem}\label{lem:charlift}
Let $h \in \U_{N}(F)$ be an affine generic element with its simple affine components $(h_1, \ldots, h_n, h_{0})$.
Then $\Theta_{\pi_{\U_{N}}}(h)$ is equal to either $0$ or 
\[
\begin{cases}
-\Kl^{0;n}_{\Nr(h_{1})\cdots\Nr(h_{n-1})h_{n}h_{0}}(\psi) & \text{if } N=2n,\\
\Kl^{1;n}_{\Nr(h_{1})\cdots\Nr(h_{n})h_{0}\epsilon}(\psi) & \text{if } N=2n+1.
\end{cases}
\]
\end{lem}

\begin{proof}
For such $h$, we take $g \in \G_{N}(F)$ satisfying the conditions in Proposition \ref{prop:affgennorm} (in particular, we have $\mathcal{N}(g)=h$).
Since $h \in \U_{N}^{\srs}(F)$, $g \in \G_{N}^{\strs}(F)$, and $h$ is a norm of $g$, 
we have 
\[
\Theta^{\G_{N}}_{\omega,a,\zeta_{\omega},\theta}(g)
=\Theta_{\pi_{\U_{N}}}(h)
\]
by Corollary \ref{cor:ECR} (note that, from the results of Section \ref{sec:main}.2, $\pi^{\G_{N}}_{\omega,a,\zeta_{\omega}}$ is the endoscopic lift of $\pi_{\U_{N}}$ via the standard base change embedding $\xi_{+1}$).
We compute the left-hand side of this equality.

If there is no $x \in \G_{N}(F)$ such that $xg\theta(x)^{-1} \in Z_{\G_{N}}I_{\G_{N}}^{+}\lan\varphi_{a^{-1}}\ran$, 
then, by the twisted character formula (Theorem \ref{thm:TCF}), we have 
\[
\Theta^{\G_{N}}_{\omega,a,\zeta_{\omega},\theta}(g)=0.
\]

Let us consider the case where there exists $x \in \G_{N}(F)$ such that 
\[
xg\theta(x)^{-1} \in Z_{\G_{N}}I_{\G_{N}}^{+}\lan\varphi_{a^{-1}}\ran.
\]
By Lemma \ref{lem:sumTG}, we may assume $x \in T_{\G_{N}}(q)$.
Since we have $\theta(\varphi_{a^{-1}})=-\varphi_{a^{-1}}^{-1}$ and $\varphi_{a^{-1}}$ normalizes $I_{\G_{N}}^{+}$, 
we have 
\[
\varphi_{a^{-1}}^k xg\theta(\varphi_{a^{-1}}^k x)^{-1}
 \in Z_{\G_{N}}(q)I_{\G_{N}}^{+}\sqcup Z_{\G_{N}}(q)I_{\G_{N}}^{+}\varphi_{a^{-1}} 
\]
for some $k \in \Z$.

We first consider the case where $\varphi_{a^{-1}}^k xg\theta(\varphi_{a^{-1}}^k x)^{-1} \in Z_{\G_{N}}(q)I_{\G_{N}}^{+}$.
We take $z\in Z_{\G_{N}}(q)$ and $g_{0}\in I_{\G_{N}}^{+}$ such that $\varphi_{a^{-1}}^k xg\theta(\varphi_{a^{-1}}^k x)^{-1}=zg_{0}$.
Then we have 
\begin{align*}
\mathcal{N}\bigl(\varphi_{a^{-1}}^k xg\theta(\varphi_{a^{-1}}^k x)^{-1}\bigr)
&=\varphi_{a^{-1}}^{k}x\mathcal{N}(g)(\varphi_{a^{-1}}^{k} x)^{-1}\\
&=z/c(z)\cdot \mathcal{N}(g_{0}).
\end{align*}
On the other hand, since $\mathcal{N}(g)=h$ belongs to  $I_{\U_{N}}^{+}\subset I_{\G_{N}}^{+}$, we have
\[
\varphi_{a^{-1}}^{k}x\mathcal{N}(g)(\varphi_{a^{-1}}^{k} x)^{-1} \in  I_{\G_{N}}^{+}.
\]
Hence we know that $z/c(z)\in 1+\mfp_{E}$ by comparing the diagonal parts.
Therefore we have 
\[
z\in k^{\times}\subset \tilde{k}^{\times}\cong Z_{\G_{N}}(q).
\]
In particular, since $\omega$ is conjugate self-dual, we have $\omega^{\G_{N}}_{\omega,a,\zeta_{\omega}}(z)=1$, where $\omega^{\G_{N}}_{\omega,a,\zeta_{\omega}}$ is the central character of $\pi^{\G_{N}}_{\omega,a,\zeta_{\omega}}$.
Thus we get 
\[
\Theta^{\G_{N}}_{\omega,a,\zeta_{\omega},\theta}(g)
=\Theta^{\G_{N}}_{\omega,a,\zeta_{\omega},\theta}\bigl(\varphi_{a^{-1}}^k xg\theta(\varphi_{a^{-1}}^k x)^{-1}\bigr)
=\Theta^{\G_{N}}_{\omega,a,\zeta_{\omega},\theta}(zg_{0})
=\Theta^{\G_{N}}_{\omega,a,\zeta_{\omega},\theta}(g_{0}).
\]
Now we compute $\Theta^{\G_{N}}_{\omega,a,\zeta_{\omega},\theta}(g_{0})$.
We put $x=\diag(t_1, \ldots, t_{N})$. 
Let $(g_1, \ldots, g_{N})$ be the simple affine components of  $g_{0}$.
Then the simple affine components of 
\[
\mathcal{N}(zg_{0})=z/c(z)\cdot \mathcal{N}(g_{0})
\]
 are given by 
\[
\bigl(g_1+c(g_{N-1}), \ldots, g_{N-1}+c(g_1), g_{N}+(-1)^{N}c(g_{N})\bigr).
\]
On the other hand, since the simple affine components of $\mathcal{N}(g)=h$ as an element of $\G_{N}(F)$ are given by
\[
\begin{cases}
\bigl(h_{1}, \ldots, h_{n-1}, h_{n}, c(h_{n-1}), \ldots, c(h_{1}), h_{0}\bigr) & \text{if } N=2n,\\
\bigl(h_{1}, \ldots, h_{n}, c(h_{n}), \ldots, c(h_{1}), h_{0}\bigr) & \text{if } N=2n+1,
\end{cases}
\]
those of $\mathcal{N}(zg_{0})=\varphi_{a^{-1}}^kx\mathcal{N}(g)(\varphi_{a^{-1}}^kx)^{-1}$ are the termwise product of $(1,\ldots,1,a^{-1})$ and some cyclic permutation of
\[
\left(\frac{t_1}{t_2}h_{1}, \ldots, \frac{t_{N-1}}{t_{N}}c(h_{1}), a\frac{t_{N}}{t_1}h_{0}\right).
\]
Therefore $g_{0}$ satisfies the assumptions of Propositions \ref{prop:charTG} and \ref{prop:charTG_{o}}, 
and we have
\begin{align*}
\Theta^{\G_{N}}_{\omega,a,\zeta_{\omega},\theta}(g_{0})
&=
\begin{cases}
-\Kl^{0;n}_{\Nr(g_1+c(g_{2n-1}))\cdots \Nr(g_{n-1}+c(g_{n+1}))\Tr(g_{n})\Tr(g_{2n})}(\psi)&\text{if } N=2n,\\
\Kl^{1;n}_{\Nr(g_1+c(g_{2n}))\cdots \Nr(g_{n}+c(g_{n+1}))\Tr(\epsilon g_{2n+1})}(\psi)&\text{if } N=2n+1,
\end{cases}\\
&=
\begin{cases}
-\Kl^{0;n}_{\Nr(h_{1})\cdots\Nr(h_{n-1})h_{n}h_{0}}(\psi) & \text{if } N=2n,\\
\Kl^{1;n}_{\Nr(h_{1})\cdots\Nr(h_{n})h_{0}\epsilon}(\psi) & \text{if } N=2n+1.
\end{cases}
\end{align*}

We next consider the case where $\varphi_{a^{-1}}^k xg\theta(\varphi_{a^{-1}}^k x)^{-1} \in Z_{\G_{N}}(q)I_{\G_{N}}^{+}\varphi_{a^{-1}}$.
We take $z\in Z_{\G_{N}}(q)$ and $g_{0}\in I_{\G_{N}}^{+}$ such that $\varphi_{a^{-1}}^k xg\theta(\varphi_{a^{-1}}^k x)^{-1}=z\varphi_{a^{-1}} g_{0}$.
Then we have 
\begin{align*}
\mathcal{N}\bigl(\varphi_{a^{-1}}^k xg\theta(\varphi_{a^{-1}}^k x)^{-1}\bigr)
&=\varphi_{a^{-1}}^{k}x\mathcal{N}(g)(\varphi_{a^{-1}}^{k} x)^{-1}\\
&=z/c(z)\cdot \mathcal{N}(\varphi_{a^{-1}} g_{0}).
\end{align*}
Hence we know that $z/c(z)\in -(1+\mfp_{E})$ by comparing the diagonal parts.
Therefore we have 
\[
z\in\epsilon k^{\times}\subset \tilde{k}^{\times}\cong Z_{\G_{N}}(q).
\]
Since $\omega$ is conjugate self-dual, we have $\omega^{\G_{N}}_{\omega,a,\zeta_{\omega}}(z)=\omega(\epsilon)$.
Thus we get
\[
\Theta^{\G_{N}}_{\omega,a,\zeta_{\omega},\theta}(g)
=\Theta^{\G_{N}}_{\omega,a,\zeta_{\omega},\theta}(z\varphi_{a^{-1}}g_{0})
=\omega(\epsilon)\cdot\Theta^{\G_{N}}_{\omega,a,\zeta_{\omega},\theta}(\varphi_{a^{-1}}g_{0}).
\]
Now we compute $\Theta^{\G_{N}}_{\omega,a,\zeta_{\omega},\theta}(\varphi_{a^{-1}}g_{0})$.
We again put $x=\diag(t_1, \ldots, t_{N})$, and let $(g_1, \ldots, g_{N})$ be the simple affine components of $g_{0}$.
Then the simple affine components of $\mathcal{N}(z\varphi_{a^{-1}} g_{0})=z/c(z)\cdot \mathcal{N}(\varphi_{a^{-1}} g_{0})$ are given by
\[
\bigl(g_2+c(g_{N-1}), \ldots, g_{N-1}+c(g_2), ag_{N}+c(g_{1}), a^{-1}g_{1}+(-1)^{N}c(g_{N})\bigr).
\]
On the other hand the simple affine components of $\mathcal{N}(z\varphi_{a^{-1}}g_{0})=\varphi_{a^{-1}}^kx\mathcal{N}(g)(\varphi_{a^{-1}}^kx)^{-1}$ are described in the same way as the previous case.
Therefore $g_{0}$ satisfies the assumptions of Propositions \ref{prop:charTG'} and \ref{prop:charTG'_{o}} in the cases where $u=1$, 
and we have
\begin{align*}
&\Theta^{\G_{N}}_{\omega,a,\zeta_{\omega},\theta}(\varphi_{a^{-1}} g_{0})\\
&=
\zeta_{\omega}\cdot
\begin{cases}
\Kl^{0;n}_{\Nr(g_1+c(g_{2n}))\cdots \Nr(g_{n}+c(g_{n+1}))}(\psi) & \text{if } N=2n,\\
\Kl^{1;n}_{\Nr(g_1-\epsilon c(g_{2n+1}))\Nr(g_{2}+c(g_{2n}))\cdots \Nr(g_{n}+c(g_{n+2}))\Tr(g_{n+1})}(\psi) & \text{if } N=2n+1.
\end{cases}\\
&=
\zeta_{\omega}\cdot
\begin{cases}
\Kl^{0;n}_{\Nr(h_{1})\cdots\Nr(h_{n-1})h_{n}h_{0}}(\psi) & \text{if } N=2n,\\
\Kl^{1;n}_{\Nr(h_{1})\cdots\Nr(h_{n})h_{0}\epsilon}(\psi) & \text{if } N=2n+1.
\end{cases}
\end{align*}
Finally, by combining Corollary \ref{cor:parity} with the right-hand side of this equality, we get the assertion.
\end{proof}

\begin{cor}\label{cor:depthbound}
The representation $\pi_{\U_{N}}$ has a nonzero $I_{\U_{N}}^{++}$-fixed vector.
\end{cor}
\begin{proof}
The assertion follows from the same argument in the proof of Corollary 5.9 in \cite{Oi:2016}.
More precisely, in the proof of Corollary 5.9 in \cite{Oi:2016}, by replacing
\begin{itemize}
\item
\cite[Proposition 4.7]{Oi:2016} with Proposition 4.2,
\item
\cite[Lemma 5.8]{Oi:2016} with Lemma 5.8, and
\item
\cite[Corollary 5.6]{Oi:2016} with Proposition 5.5,
\end{itemize}
the same argument works for our situation.
\end{proof}

\subsection{Simple supercuspidality of $\pi_{\U_{N}}$}
Let $\pi_{\omega,a,\zeta_{\omega}}^{\GL_{N}}$ and $\pi_{\U_{N}}$ be as in the previous subsection.
In this subsection, we prove that $\pi_{\U_{N}}$ is simple supercuspidal.

\begin{lem}\label{lem:hyperspecial}
Let $\bold{x}$ be a hyperspecial point of the apartment $\mathcal{A}(\U_{N}, \bfS_{\U_{N}})\cong X_{\ast}(\bfS_{\U_{N}})\otimes_{\Z}\R$ given by
\[
\begin{cases}
0 \text{ or } (\check{e}_{1}+\cdots+\check{e}_{n})/2 & \text{if } N=2n,\\
0 & \text{if } N=2n+1,
\end{cases}
\]
and $\U_{N,\bold{x}}$ the hyperspecial subgroup of $\U_{N}(F)$ associated to $\bold{x}$.
Let $y\in \U_{N}(F)$.
If $y$ satisfies $yhy^{-1} \in \U_{N,\bold{x}}$ for an affine generic element $h \in \U_{N}(F)$, then $y\in \U_{N,\bold{x}}$.
\end{lem}

\begin{proof}
Before we begin to prove the assertion, we recall that the hyperspecial subgroup $\U_{N,\bold{x}}$ attached to the hyperspecial point $\bold{x}$ as above can be described as follows:
\begin{description}
\item[the case where $\bold{x}=0$]
\[
\U_{N,\bold{x}}
=
\{h=(h_{ij})_{ij} \in\U_{N}(F) \mid \text{$h_{ij}\in\mathcal{O}_{E}$ for every $i,j$}\},
\]
\item[the case where $N=2n$ and $\bold{x}=(\check{e}_{1}+\cdots+\check{e}_{n})/2$]
\[
\U_{N,\bold{x}}
=
\biggl\{h=\begin{pmatrix}A&\varpi^{-1}B\\\varpi C&D\end{pmatrix} \in\U_{N}(F) \,\bigg\vert\, A,B,C,D\in M_{n,n}(\mathcal{O}_{E})\biggr\},
\]
where $M_{n,n}(\mathcal{O}_{E})$ is the set of $n$-by-$n$ matrices whose entries belong to $\mathcal{O}_{E}$.
\end{description}

Let $y \in \U_{N}(F)$ satisfying $yhy^{-1} \in \U_{N,\bold{x}}$ for an affine generic element $h \in I_{\U_{N}}^{+}$.
As affine genericity is preserved by $I_{\U_{N}}$-conjugation, any element of $\U_{N,\bold{x}}yI_{\U_{N}}$ satisfies the same condition as $y$.
It suffices to show $\U_{N,\bold{x}}yI_{\U_{N}} \subset \U_{N,\bold{x}}$.

From the decomposition in \cite[Theorem 1.4]{MR3481263}, we have
\[
\U_{N,\bold{x}}\backslash \U_{N}(F)/I_{\U_{N}} \cong \bigl(\bigl(N_{\bfS_{\U_{N}}}\cap \U_{N,\bold{x}}\bigr)/T_{\U_{N}}^{0}\bigr)\big\backslash\bigl(N_{\bfS_{\U_{N}}}/T_{\U_{N}}^{0}\bigr),
\]
where $N_{\bfS_{\U_{N}}}$ is the $F$-valued points of the normalizer $\mathbf{N}_{\bfS_{\U_{N}}}$ of $\bfS_{\U_{N}}$ in $\U_{N}$.
The right-hand side of this equality is represented by
\[
\begin{cases}
\big\{\diag(\varpi^{r_1}, \ldots, \varpi^{r_n}, \varpi^{-r_n}, \ldots, \varpi^{-r_1}) \mid r_1, \ldots, r_n\in\Z \big\} & \text{if } N=2n,\\
\big\{\diag(\varpi^{r_1}, \ldots, \varpi^{r_n},1, \varpi^{-r_n}, \ldots, \varpi^{-r_1}) \mid r_1, \ldots, r_n\in\Z \big\}& \text{if } N=2n+1.
\end{cases}
\]
Hence we may assume 
\[
y
=\diag(t_1, \ldots, t_{N})
=\begin{cases}
\diag(\varpi^{r_1}, \ldots, \varpi^{r_n}, \varpi^{-r_n}, \ldots, \varpi^{-r_1}) & \text{if } N=2n,\\
\diag(\varpi^{r_1}, \ldots, \varpi^{r_n},1, \varpi^{-r_n}, \ldots, \varpi^{-r_1}) & \text{if } N=2n+1.
\end{cases}
\]

Since $(yhy^{-1})_{ij}=t_i/t_j\cdot h_{ij}$, and $\bold{x}$ equals either $0$ or $(\check{e}_{1}+\cdots+\check{e}_{n})/2$, we have the following inequalities:
\[
r_1-r_2 \geq0,\quad\ldots,\quad
r_{n-1}-r_n \geq0,
\]
\[
\begin{cases}
2r_n \geq-1 & \text{if }N=2n\\
r_n \geq 0 & \text{if }N=2n+1
\end{cases},\quad\text{and}\quad
-2r_1 \geq-1.
\]
Therefore $(r_1, \ldots, r_n)$ is equal to $(0, \ldots, 0)$.

Hence we have $\U_{N,\bold{x}}yI_{\U_{N}}=\U_{N,\bold{x}}I_{\U_{N}}=\U_{N,\bold{x}}$, and this completes the proof.
\end{proof}

\begin{prop}\label{prop:simplity}
The representation $\pi_{\U_{N}}$ is simple supercuspidal.
\end{prop}

\begin{proof}
We first note that $I_{\U_{N}}^{++}$ is the $(\frac{1}{N}+)$-th Moy--Prasad filtration of $I_{\U_{N}}$.
Therefore, from Corollary \ref{cor:depthbound}, the depth of $\pi_{\U_{N}}$ is not greater than $\frac{1}{N}$.
Since $\frac{1}{N}$ is the minimal positive depth of representations of $\U_{N}(F)$, we can conclude that $\pi_{\U_{N}}$ is depth-zero or simple supercuspidal (see Corollary B.8 and Proposition B.10 in \cite{Oi:2018}).
Thus our task is to eliminate the possibility that the depth of $\pi_{\U_{N}}$ is zero.

Let us suppose that $\pi_{\U_{N}}$ is depth-zero supercuspidal.
Since every tempered $L$-packet contains a generic representation with respect to a fixed Whittaker datum (Corollary 9.2.4 in \cite{MR3338302}), $\pi_{\U_{N}}$ is generic.
Then, by Lemma 6.1.2 in \cite{MR2480618}, the generic depth-zero supercuspidal representation $\pi_{\U_{N}}$ can be obtained by the compact induction of a representation of the reduction of the hyperspecial subgroup $\U_{N,\bold{x}}$ of $\U_{N}(F)$ for some hyperspecial vertex $\bold{x}\in\mathcal{A}(\U_{N}, \bfS_{\U_{N}})$.
Since every hyperspecial vertex of the building $\mathcal{B}(\U_{N})$ of $\U_{N}$ is conjugate to
\[
\mathcal{A}(\U_{N}, \bfS_{\U_{N}})\ni
\begin{cases}
0 \text{ or } (\check{e}_{1}+\cdots+\check{e}_{n})/2 & \text{if } N=2n,\\
0 & \text{if } N=2n+1,
\end{cases}
\]
we may assume $\bold{x}$ is one of them (see \cite[Section 4]{MR546588}).

Let $\U_{N,\bold{x}}^{+}$ be the pro-unipotent radical of $\U_{N,\bold{x}}$.
Let $\pi_{\U_{N}} \cong \cInd_{\U_{N,\bold{x}}}^{\U_{N}(F)} \dot{\rho}$, where $\dot{\rho}$ is the inflation of a representation $\rho$ of $\U_{N,\bold{x}}/\U_{N,\bold{x}}^{+}$ to $\U_{N,\bold{x}}$.
Then, by the character formula (Theorem \ref{thm:CF}), we have
\[
\Theta_{\pi_{\U_{N}}}\bigl(\mathcal{N}(1+\varphi_{\tilde{u}})\bigr) = 
\sum_{x\in \U_{N,\bold{x}} \backslash \U_{N}(F) / \U_{N,\bold{x}}}
\sum_{\begin{subarray}{c} y\in \U_{N,\bold{x}} \backslash \U_{N,\bold{x}}x\U_{N,\bold{x}}\\ y\mathcal{N}(1+\varphi_{\tilde{u}})y^{-1} \in \U_{N,\bold{x}} \end{subarray}} \tr \dot{\rho}\bigl(y\mathcal{N}(1+\varphi_{\tilde{u}})y^{-1}\bigr)
\]
for any $u\in k^{\times}$.
Here, for $u\in k^{\times}$, we put 
\[
\tilde{u}=
\begin{cases}
u& \text{if } N=2n,\\
u\epsilon^{-1}& \text{if } N=2n+1.
\end{cases}
\]
The index set of this sum consists of only one element by Lemma \ref{lem:hyperspecial}, and we have
\[
\Theta_{\pi_{\U_{N}}}\bigl(\mathcal{N}(1+\varphi_{\tilde{u}})\bigr)=\tr\dot{\rho}\bigl(\mathcal{N}(1+\varphi_{\tilde{u}})\bigr).
\]

We first consider the case where $\bold{x}=0$.
Then the image of $\mathcal{N}(1+\varphi_{\tilde{u}})$ in $\U_{N,\bold{x}}/\U_{N,\bold{x}}^{+}$ is independent of $\tilde{u}$.
Hence the character $\Theta_{\pi_{\U_{N}}}(\mathcal{N}(1+\varphi_{\tilde{u}}))$ is constant on $u \in k^{\times}$.
However we have 
\[
\Theta_{\pi_{\U_{N}}}\bigl(\mathcal{N}(1+\varphi_{\tilde{u}})\bigr)
=
\begin{cases}
-\Kl^{0;n}_{2^{2n}u}(\psi) & \text{if } N=2n,\\
\Kl^{1;n}_{2^{2n+1}u}(\psi) & \text{if } N=2n+1,
\end{cases}
\]
by Proposition \ref{prop:CR}.
In particular, this is not constant on $u$ (see, e.g., \cite[Proposition A.2]{Oi:2018}).
This is a contradiction.

We next consider the case where $\bold{x}=(\check{e}_{1}+\cdots+\check{e}_{n})/2$ (note that $N=2n$ in this case).
We take $v\in \tilde{k}^{\times}$ such that $\Nr(v)=\tilde{u}$, and set 
\[
t=\diag\bigl(v, \ldots, v, c(v)^{-1}, \ldots, c(v)^{-1}\bigr) \in \U_{N,\bold{x}}.
\]
Then we have 
\[
t\mathcal{N}(1+\varphi_{\tilde{u}})t^{-1} \equiv \mathcal{N}(1+\varphi_1) \pmod{\U_{N,\bold{x}}^{+}}.
\]
Hence, by applying the character formula (Theorem \ref{thm:CF}) and Lemma \ref{lem:hyperspecial} to $t\mathcal{N}(1+\varphi_{\tilde{u}})t^{-1}$, we get 
\[
\Theta_{\pi_{\U_{N}}}\bigl(\mathcal{N}(1+\varphi_{\tilde{u}})\bigr)
=\Theta_{\pi_{\U_{N}}}\bigl(t\mathcal{N}(1+\varphi_{\tilde{u}})t^{-1}\bigr)
=\tr\dot{\rho}\bigl(\mathcal{N}(1+\varphi_1)\bigr).
\]
Thus the character $\Theta_{\pi_{\U_{N}}}(\mathcal{N}(1+\varphi_{\tilde{u}}))$ is constant on $u\in k^{\times}$.
This is a contradiction.
\end{proof}

\subsection{Main theorem}
\begin{prop}\label{prop:main}
The representation $\pi_{\U_{N}}$ of $\U_{N}(F)$ associated to $\pi_{\omega,a,\zeta_{\omega}}^{\G_{N}}$ is given by $\pi^{\U_{N}}_{\omega^{1},a}$.
Here we regard $\omega$ also as a character on $\tilde{k}^{1}$ via the isomorphism $z\mapsto z/c(z)$ from $\tilde{k}^{\times}/k^{\times}$ to $\tilde{k}^{1}$, and denote it by $\omega^{1}$.
\end{prop}

\begin{proof}
By replacing the fixed uniformizer $\varpi$, we may assume that 
\[
a=
\begin{cases}
1& \text{if }N=2n,\\
\epsilon& \text{if }N=2n+1.
\end{cases}
\]
By Proposition \ref{prop:simplity}, $\pi_{\U_{N}}$ is simple supercuspidal.
We take $\omega'\in(\tilde{k}^{1})^{\vee}$ and $b\in k^{\times}$ such that $\pi_{\U_{N}} \cong \pi^{\U_{N}}_{\omega',ab}$.
Our task is to determine $b$ and $\omega'$.

For $u \in k^{\times}$, we put
\[
\tilde{u}=
\begin{cases}
u& \text{if }N=2n,\\
u\epsilon^{-1}& \text{if }N=2n+1.
\end{cases}
\]
Then we have
\begin{align*}
\Theta_{\pi_{\U_{N}}}\bigl(\mathcal{N}(1+\varphi_{\tilde{u}})\bigr)
=\Theta^{\G_{N}}_{\omega,a,\zeta_{\omega},\theta}(1+\varphi_{\tilde{u}})
=
\begin{cases}
-\Kl_{2^{2n}u}^{0;n}(\psi) & \text{if } N=2n,\\
\Kl_{2^{2n+1}u}^{1;n}(\psi) & \text{if } N=2n+1
\end{cases}
\end{align*}
by Proposition \ref{prop:CR}.
On the other hand, since $\mathcal{N}(1+\varphi_{\tilde{u}})\in I_{\U_{N}}^{+}$ is an affine generic element with its simple affine components $(2, \ldots, 2, 2\tilde{u})$, we have
\[
\Theta_{\pi_{\U_{N}}}\bigl(\mathcal{N}(1+\varphi_{\tilde{u}})\bigr) =
\begin{cases}
-\Kl_{2^{2n}bu}^{0;n}(\psi) & \text{if } N=2n,\\
\Kl_{2^{2n+1}bu}^{1;n}(\psi) & \text{if } N=2n+1
\end{cases}
\]
by Propositions \ref{prop:charU} and \ref{prop:charU_{o}}.
Hence we get $b=1$ (see, e.g., \cite[Proposition A.3]{Oi:2018}).

For $z\in Z_{\G_{N}}(q)$, we have 
\[
\Theta_{\pi_{\U_{N}}}\bigl(z/c(z)\cdot \mathcal{N}(1+\varphi_{\tilde{u}})\bigr)=\omega'\bigl(\ol{z/c(z)}\bigr)\cdot\Theta_{\pi_{\U_{N}}}\bigl(\mathcal{N}(1+\varphi_{\tilde{u}})\bigr).
\]
From Proposition \ref{prop:CR}, we get $\omega'\bigl(\ol{z/c(z)}\bigr)=\omega(\ol{z})$.
In other words, we have $\omega'=\omega^{1}$.
\end{proof}

From this proposition and Corollary \ref{cor:parity}, for $(\omega^{1},b)\in\SSC(\U_{N})$, we know that the lift of $\pi^{\U_{N}}_{\omega^{1},b}$ to $\G_{N}(F)$ via $\xi_{+1}$ is given by $\pi^{\G_{N}}_{\omega,b,(-1)^{N-1}\omega^{1}(-1)}$.
Here we regard $\omega^{1}\in(\tilde{k}^{1})^{\vee}$ as an element of $(\widetilde{k}^{\times}/k^{\times})^{\vee}$ and denote it by $\omega$.
Then, by Lemmas \ref{lem:csdLpar} and \ref{lem:csdssc}, we also know that the lift of $\pi^{\U_{N}}_{\omega^{1},b}$ to $\G_{N}(F)$ via $\xi_{-1}$ is given by $\pi^{\G_{N}}_{\omega,b,(-1)^{N}\omega^{1}(-1)}$.

In summary, we get the following result.

\begin{thm}[Main theorem]\label{thm:main}
Let $(\omega^{1},b)\in\SSC(\U_{N})$.
Then the following hold.
\begin{enumerate}
\item
The $L$-packet containing the simple supercuspidal representation $\pi^{\U_{N}}_{\omega^{1},b}$ of $\U_{N}(F)$ is a singleton.
In particular, the character of $\pi^{\U_{N}}_{\omega^{1},b}$ is stable.
\item
The endoscopic lift of the simple supercuspidal representation $\pi^{\U_{N}}_{\omega^{1},b}$ of $\U_{N}(F)$ to $\G_{N}(F)$ via the $L$-embedding $\xi_{\kappa}$ is again simple supercuspidal, and given by $\pi^{\G_{N}}_{\omega,b,(-1)^{N-1}\kappa\omega^{1}(-1)}$.
\end{enumerate}
\end{thm}

\begin{rem}\label{rem:L-par}
From this result, we know that the $L$-parameter of $\pi^{\U_{N}}_{\omega^{1},b}$ is equal to that of its endoscopic lift (as representations of $W_{F}$).
On the other hand, $L$-parameters of simple supercuspidal representations of general linear groups have been determined explicitly by the works \cite{MR2148193} and \cite{Imai:2015aa}.
Therefore we can get an explicit description of $L$-parameters of simple supercuspidal representations of $\U_{N}(F)$. 
\end{rem}

%
%

\end{document}